\begin{document}

\title{On the typical rank of real polynomials (or symmetric tensors) with a fixed border rank\thanks{The author was partially supported by MIUR and GNSAGA of INdAM (Italy).}
}

%\titlerunning{Short form of title}        % if too long for running head

\titlerunning{typical rank}       

\author{Edoardo Ballico}

\authorrunning{E. Ballico}

\institute{E. Ballico\at
            Dept. of Mathematics, University of Trento, 38123 Trento (Italy) \\
              Tel.: 39+0461281646\\
              Fax: 39+0461281624\\
              \email{ballico@science.unitn.it}}

\date{Received: date / Accepted: date}
% The correct dates will be entered by the editor

\maketitle

\begin{abstract}
Let $\sigma _b(X_{m,d}(\mathbb {C}))(\mathbb {R})$, $b(m+1) < {m+d \choose m}$, denote the set of all degree $d$ real homogeneous polynomials in $m+1$ variables (i.e. real symmetric tensors
of format $(m+1)\times \cdots \times (m+1)$, $d$ times) which
have border rank $b$ over $\mathbb {C}$. It has a partition into manifolds of real dimension $\le b(m+1)-1$ in which the real rank is constant. A typical rank
of $\sigma _b(X_{m,d}(\mathbb {C}))(\mathbb {R})$ is a rank associated to an open part of dimension $b(m+1)-1$.
Here we classify all typical ranks when $b\le 7$
and $d, m$ are not too small. For a larger sets of $(m,d,b)$ we prove that $b$ and $b+d-2$ are the two first typical ranks. In the case $m=1$ (real bivariate polynomials) we prove that $d$ (the maximal possible a priori value of the real
rank) is a typical rank for every $b$.
\keywords{symmetric tensor rank \and Veronese variety \and real rank \and typical rank \and secant variety \and border rank \and bivariate polynomial}
% \PACS{PACS code1 \and PACS code2 \and more}
 \subclass{14N05 \and 14Q05 \and 15A69}
\end{abstract}

\section{Introduction}
\label{intro}
Fix an integer $m>0$ and call $\mathbb {K}$ either the real field $\mathbb {R}$ or the complex field $\mathbb {C}$. For every integer $d\ge 0$ let
$\mathbb {K}[x_0,\dots ,x_m]_d$ denote the $\mathbb {K}$-vector space of all degree $d$ polynomials in the variables $x_0,\dots ,x_m$ and with coefficients
in $\mathbb {K}$. Now assume $d>0$ and fix $f\in \mathbb {K}[x_0,\dots ,x_m]_d\setminus \{0\}$. The rank or the symmetric tensor rank $r_{\mathbb {K}}(f)$ of $f$ with respect
to $\mathbb {K}$ is the minimal integer $s>0$ such that
\begin{equation}\label{eqi1}
f = c_1\ell _1^d+\cdots +c_s\ell _s^d
\end{equation}
for some $\ell _i\in \mathbb {K}[x_0,\dots ,x_m]_1$ and some $c_i\in \mathbb {K}$ (\cite{l}, \S 5.4). If either $\mathbb {K}=\mathbb {C}$ or  $d$ is odd and $\mathbb {K}=\mathbb {R}$,
then we may take $c_i=1$ for all $i$ without any loss of generality. If $d$ is even and $\mathbb {K}=\mathbb {R}$, then we may take $c_i\in \{-1,1\}$ without any loss
of generality. We may see any symmetric tensor of format $(m+1)\times \cdots \times (m+1)$ ($d$ products) as a polynomial $f\in \mathbb {K}[x_0,\dots ,x_m]_d$. Any tensor
has a tensor rank, but it is not known if for a symmetric tensor its rank as a tensor and its rank as a polynomial are the same. Comon's conjecture asks if symmetric rank is equal to rank for all symmetric tensors. In this paper
we will always consider the symmetric rank of a symmetric tensor $T$, i.e. we will see $T$ as as a polynomial $f$ and take $r_{\mathbb {K}}(f)$ as the integer associated to $T$;  we call it the symmetric tensor rank of $T$ or $f$. These notions appears in several different topics in engineering
(\cite{bk}, \cite{cc}, \cite{cm}, \cite{ctdc}, \cite{cglm}, \cite{k}, \cite{ls}, \cite{mh}, \cite{sbg}, \cite{sbg1}, \cite{t}). The book \cite{l} contains a huge bibliography which contains both
the applied side and the theoretical side of this topic. In many applications the data are
known only approximatively. In this case over $\mathbb {C}$ there is a non-empty and dense open subset $\mathcal {U}$ of $ \mathbb {C}[x_0,\dots ,x_m]_d \cong \mathbb {C}^{r+1}$,
$r:= {m+d \choose m} -1$, such that all $f\in \mathcal {U}$ have the same rank $r_{\mathbb {C}}(f)$ and this rank is called the generic rank. If $m=1$ and $d\ge 2$,
then $ \lfloor (d+2)/2\rfloor$ is the generic rank (\cite{bgi}, \cite{co}, \cite{l}, \cite{lt}). In the case $m \ge 2$ the generic rank is also known by a theorem of Alexander and Hirschowitz: for each $d\ge 3$
the generic rank is $\lceil {m+d \choose m}/(m+1)\rceil$ except in $4$ well-studied exceptional cases (\cite{ah}, \cite{ah1}, \cite{bo}, \cite{c}). The situation in the case $\mathbb {K}=\mathbb {R}$ is more complicated, because $ \mathbb {R}[x_0,\dots ,x_m]_d \cong \mathbb {R}^{r+1}$ has several non-empty open subsets for the euclidean topology in which the real rank is constant, but these constants are not the same for different open sets. The ranks with respect to $\mathbb {R}$ arising in these open
subsets are called the typical ranks (\cite{co}, \cite{cr}). Only the bivariate cases was recently solved (\cite{bl}). However, quite often we get polynomials (or symmetric tensors) with some
constraints and one may study the generic rank (case $\mathbb {K}=\mathbb {C}$) or the typical ranks (case $\mathbb {K}=\mathbb {R}$) for polynomials with those constraints.
An algebraic constraint which is quite studied over $\mathbb {C}$ is the border rank (\cite{l}, Chapter 5), which we now define. We always assume $d\ge 2$. Let
$\nu _d: \mathbb {P}^m(\mathbb {K}) \to \mathbb {P}^r(\mathbb {K})$, $r:= {m+d \choose m}-1$, be the order $d$ Veronese embedding, i.e.
the embedding induced by the vector space $\mathbb {K}[x_0,\dots ,x_m]_d$. Set $X_{m,d}(\mathbb {K}):= \nu _d(\mathbb {P}^m(\mathbb {K}))$. Let $Y\subset \mathbb {P}^n(\mathbb {K})$ be
any set spanning $\mathbb {P}^n(\mathbb {K})$. For any $P\in \mathbb {P}^n(\mathbb {K})$ the $Y$-rank $r_Y(P)$ or $r_{Y,\mathbb {K}}(P)$ of $P$ with respect
to $\mathbb {K}$ is the minimal cardinality
of a set $S\subset Y$ such that $P\in \langle S\rangle$, where $\langle \ \ \rangle$ denote the linear span. Any $f\in \mathbb {K}[x_0,\dots ,x_m]_d\setminus \{0\}$
induces $P\in \mathbb {P}^r(\mathbb {K})$ and $r_{X_{m,d}(\mathbb {K})}(P) = r_{\mathbb {K}}(f)$. Now assume that $Y\subset \mathbb {P}^n$ is a geometrically integral variety defined over $\mathbb {K}$. For each integer $b >0$ let
$\sigma _b(Y(\mathbb {C}))$ denote the closure of the union of all linear spaces $\langle S\rangle$ with $S\subset Y(\mathbb {C})$ and $\sharp (S) =b$ (or $\le b$).  For any $P\in \mathbb {P}^r(\mathbb {C})$ the border rank $br (P)$ of $P$ is the minimal integer $b\ge 1$ such that $P\in \sigma _b(X_{m,d}(\mathbb {C}))$. The set $\sigma _b(X_{m,d}(\mathbb {C}))$ is an integral variety
defined over $\mathbb {R}$ and we may look at its real points $\sigma _b(X_{m,d}(\mathbb {C}))(\mathbb {R})$. The integer $\alpha (m,d,b):= \dim (\sigma _b(X_{m,d}(\mathbb {C})) )$ is known
(in the case $m \ge 2$ by the quoted theorem of Alexander-Hirschowitz, while it was classically known that
$\alpha (1,d,b) = \min \{d,2b-1\}$ for all $d, b$ (\cite{bgi}, \cite{lt}, \cite{l}, Chapter 5). The set $\sigma _b(X_{m,d}(\mathbb {C}))(\mathbb {R})$ has a partition into topological manifolds of dimensions $\le \alpha (m,d,b)$
with finitely many connected open subsets of dimension $\alpha (m,d,b)$ with the additional condition that on each of these connected pieces the symmetric rank is constant.
We call any such symmetric rank a typical $b$-rank or a typical rank for the border rank $b$. Notice that we use  $\sigma _b(X_{m,d}(\mathbb {C}))(\mathbb {R})$, not
something constructed only using $X_{m,d}(\mathbb {R})$. In many cases we have the equations of $\sigma _b(X_{m,d}(\mathbb {C}))$ and hence we may check
if $P\in (\sigma _b(X_{m,d}(\mathbb {C}))\setminus \sigma _{b-1}(X_{m,d}(\mathbb {C})))$ (\cite{l}, Chapter 7, \cite{o}, and references therein).

In section \ref{S2} we study the bivariate case and prove that $d$ is a typical rank for every border rank, i.e. we prove the following result.

\begin{theorem}\label{u1}
Fix integers $b, a, d$ such that $2 \le b \le (d+2)/2$, $d\ge $ and $1 \le a \le b/2$. Fix $Q_1,\dots ,Q_a\in \mathbb {P}^1(\mathbb {C})$
and $P_1,\dots ,P_{b-2a}\in \mathbb {P}^1(\mathbb {R})$ such that $P_i\ne P_j$ for all $i\ne j$
and $\sharp (\{Q_1,\dots ,Q_a,\sigma (Q_1),\dots ,\sigma (Q_a)\}) =2a$. Set
$$A:= \{Q_1,\dots ,Q_a,\sigma (Q_1),\dots ,\sigma (Q_a),P_1,\dots ,P_{b-2a}\}$$and $M:= \langle \nu _d(A)\rangle (\mathbb {R})$.
$M$ is a $(b-1)$-dimensional real vector space and there is a non-empty open subset of $M$ in the euclidean topology such that
$r_{\mathbb {R}}(P) =d$ for all $P\in U$.
\end{theorem}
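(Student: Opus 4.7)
The plan is in three steps: a dimension count for $M$; construction of a distinguished $P^*\in M$ with $r_{\mathbb R}(P^*)=d$; and a compactness argument, based on apolarity, that spreads this to a euclidean neighborhood of $P^*$ in $M$.

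The dimension count is routine. The $b$ points $\nu_d(A)$ sit on the rational normal curve $X_{1,d}(\mathbb C)\subset\mathbb P^d(\mathbb C)$, and because $b\le (d+2)/2\le d+1$ they are linearly independent; hence they span a $(b-1)$-dimensional complex projective subspace. Since $A$ is $\sigma$-stable, so is this span, which is therefore defined over $\mathbb R$, with real locus $M$.

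For the distinguished point I use the standing hypothesis $a\ge 1$ and set $P^* := \nu_d(Q_1)+\nu_d(\sigma(Q_1))$; being $\sigma$-fixed and in the $\mathbb C$-span of $\nu_d(A)$, it lies in $M$. As a bivariate form, $P^*$ is $f^*=\ell^d+\bar\ell^d$ for a non-real $\ell\in\mathbb C[x_0,x_1]_1$, so the complex border rank of $f^*$ equals $2$. By Macaulay's classical Gorenstein duality for apolar ideals of binary forms, $\mathrm{Ann}(f^*)$ is then a complete intersection $(q,h)$ with $\deg q=2$ and $\deg h=d$; here $q\in\mathbb R[y_0,y_1]_2$ is (up to scalar) the unique real quadric vanishing at $Q_1$ and $\sigma(Q_1)$. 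Consequently $\mathrm{Ann}(f^*)_s=q\cdot\mathbb R[y_0,y_1]_{s-2}$ for every $s<d$, so every nonzero element of $\mathrm{Ann}(f^*)_s$ is divisible by $q$ and hence inherits its two non-real conjugate roots. The Sylvester-type apolarity formula for real rank of a binary form $f$---that $r_{\mathbb R}(f)$ is the least $s$ for which $\mathrm{Ann}(f)_s$ contains a squarefree polynomial with $s$ distinct real roots---then forces $r_{\mathbb R}(f^*)\ge d$, and together with the classical upper bound $r_{\mathbb R}\le d$ yields $r_{\mathbb R}(P^*)=d$.

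Finally, suppose for contradiction that no euclidean neighborhood of $P^*$ in $M$ has constant rank $d$: there is a sequence $P_n\in M$, $P_n\to P^*$, with $r_{\mathbb R}(P_n)\le d-1$ for all $n$. For each $n$ pick $s_n\le d-1$ and a squarefree $g_n\in\mathrm{Ann}(P_n)_{s_n}$ with $s_n$ distinct real roots, normalised so that $\|g_n\|=1$. After passing to a subsequence, $s_n\equiv s$ and $g_n\to g^*\in\mathbb R[y_0,y_1]_s$ with $\|g^*\|=1$. Bilinearity and continuity of the apolar pairing $(g,P)\mapsto g(\partial)P$ give $g^*(\partial)P^*=0$, so $g^*\in\mathrm{Ann}(f^*)_s$; and closedness of the hyperbolic (all-real-roots) locus in $\mathbb R[y_0,y_1]_s$ forces $g^*$ to be hyperbolic as well. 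But every nonzero element of $\mathrm{Ann}(f^*)_s$ is a multiple of $q$, hence carries the non-real roots $Q_1,\sigma(Q_1)$, contradicting hyperbolicity. This yields the open set $U\subset M$ required by the statement. The delicate step is this last limit argument: the normalisation $\|g_n\|=1$ is essential to keep $g^*$ nonzero, and continuity of the apolar pairing is what promotes membership in $\mathrm{Ann}(P_n)$ to membership in $\mathrm{Ann}(P^*)$ at the limit. The remaining ingredients---the Gorenstein structure of $\mathrm{Ann}(f^*)$, the Sylvester apolarity formula for the real rank of a binary form, and the bound $r_{\mathbb R}\le d$---are classical.
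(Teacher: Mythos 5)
Your proof is correct, but it takes a genuinely different route from the paper's. The paper proves the stronger Theorem \ref{u2}: in explicit coordinates it also starts from a real point of the line $\langle \nu _d(Q_1),\nu _d(\sigma (Q_1))\rangle$, but it shows by direct computation (the substitution $w=(z+i)/(z-i)$, $w^d=-\bar c/c$) that the corresponding binary form has $d$ distinct real roots; since having $d$ distinct real roots is an open condition, every form of the family (\ref{equ1}) obtained by adding sufficiently small contributions from the remaining points of $A$ still has $d$ distinct real roots, and the conclusion $r_{\mathbb R}=d$ on that explicitly described open set follows from Causa--Re \cite{cr}; the lower bound at the distinguished point is obtained via the $h^1$-argument of the Claim in Theorem \ref{w2} (\cite{bb1}, Lemma 1, and \cite{bgi}, Lemma 34), which is the projective-geometric dual of your apolarity computation. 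You never look at the roots of $f^*$: you get $r_{\mathbb R}(P^*)=d$ from the complete-intersection structure $\mathrm{Ann}(f^*)=(q,h)$ with $\deg q=2$ together with the real Sylvester criterion, and you replace the explicit perturbation by a compactness argument --- normalized annihilators $g_n$ of nearby points of rank $\le d-1$ converge along a subsequence to a nonzero product of real linear forms lying in $\mathrm{Ann}(f^*)_s$ with $s<d$, impossible because that graded piece equals $q\cdot\mathbb R[y_0,y_1]_{s-2}$ and $q$ has non-real roots. The paper's route buys an explicit open set $U$ (forms with $d$ distinct real roots) and a short appeal to \cite{cr}; yours buys independence from the ``$d$ distinct real roots implies rank $d$'' theorem, using only the apolarity lemma and the maximal-rank bound $r_{\mathbb R}\le d$, and it works verbatim under the weaker hypothesis $b\le d+1$. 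The two points that make your limit step sound are exactly the ones you flag: the normalization $\Vert g_n\Vert =1$, and closedness of the locus of degree-$s$ products of real linear forms (image of a compact set bounded away from $0$, so the limit is again such a product, possibly with repeated factors, which still contradicts divisibility by $q$). The only cosmetic caveat is that $\nu _d(Q_1)+\nu _d(\sigma (Q_1))$ must be read as $\ell ^d+\bar\ell ^d$ for a fixed representative $\ell$; this is nonzero precisely because $Q_1\notin \mathbb P^1(\mathbb R)$ makes $\ell$ and $\bar\ell$ non-proportional.
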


We recall that in the bivariate case $d$ is the maximum of all real ranks. Hence knowing that $d$ is a typical rank for each border rank $\ne 1$ is the worst news we
could get on this subject. We prove a finer result which shows that the degree $d$ bivariate polynomials with real symmetric rank $d$ are ubiquitous (Theorem \ref{u2} and Remark \ref{u3}).

In section \ref{S3} we study the multivariate case. We first consider the border ranks $\le 7$. In each case we give all the typical ranks for
border rank $b\le 7$ if, say,  $m \ge \max \{2,b-1\}$ and $d$ is large (see Theorems \ref{w2},\ldots ,\ref{w7}). We also describe all the real ranks when $b=2$ (see Theorem \ref{w2}). For every $m\ge 2$, $b\ge 2$ and
for all $d\ge 2b-1$ we prove that $b$ and $b+d-2$ are the two smallest typical ranks for the border rank $b$ (Theorem \ref{w8}).

Thanks are due to a referee for useful suggestions.

\section{The multivariate case}\label{S3}
Let $\sigma
: \mathbb {C} \to \mathbb {C}$ be the complex conjugate and write $\sigma$ also for the involution induced by conjugation on each projective space
$\mathbb {P}^y(\mathbb {C})$. Hence $\mathbb {P}^y(\mathbb {R}) =\{P\in \mathbb {P}^y(\mathbb {C}):\sigma (P)=P\}$.

\begin{remark}\label{w0}
Fix integers $m\ge 1$, $b >0$, $d\ge 3$ such that $\sigma _b(X_{m,d}(\mathbb {C}) )\subsetneq \mathbb {P}^r(\mathbb {C})$ (e.g. assume $b(m+1) < {m+d \choose m}$).
Obviously $b$ is a typical rank of $\sigma _b(X_{m,d}(\mathbb {C}))(\mathbb {R})$.
\end{remark}

\begin{remark}\label{w1}
Fix positive integers $m, d, b$ such that $b\ge 2$ and $d\ge 2b-1$. We have $\dim _{\mathbb {C}}(\sigma _b(X_{m,d}(\mathbb {C})))= b(m+1)-1$. For any $P\in (\sigma _b(X_{m,d}(\mathbb {C}))\setminus \sigma _{b-1}(X_{m,d}(\mathbb {C})))$
there is a unique zero-dimensional scheme $Z\subset \mathbb {P}^m(\mathbb {C})$ such that $\deg (Z)=b$ and $P\in \langle \nu _d(Z)\rangle$ (\cite{bgi}, Proposition 11, \cite{bgl}, Lemma 2.1.6 and proof of Theorem 1.5.1, \cite{bb1}, Remark 1 and Lemma 1). Now
assume $P\in \sigma _b(X_{m,d}(\mathbb {C}))(\mathbb {R})$. Since $\sigma (P)=P$, the uniqueness of $Z$ implies $\sigma (Z)=Z$, i.e. the scheme $Z$ is defined over $\mathbb {R}$.
Hence the finite set $J:= Z_{red}$ is defined over $\mathbb {R}$. Set $e:= \sharp (J)$. Since $J$ is defined over $\mathbb {R}$, there are an integer
$a$ such that $0\le 2a \le e$, distinct points $Q_1,\dots ,Q_a\in  \mathbb {P}^m(\mathbb {C})\setminus \mathbb {P}^m(\mathbb {R})$ and $e-2a$ distinct points
$P_j\in  \mathbb {P}^m(\mathbb {R})$, $1\le j \le e-2a$, such that $J = \{P_1,\dots ,P_{e-2a},Q_1,\sigma (Q_1),\dots
,Q_a,\sigma (Q_a)\}$; the only restriction is that $Q_i\ne \sigma (Q_j)$ for all $i, j$. Now we vary $J\subset \mathbb
{P}^m(\mathbb {C})$ fixing $e$ and $a$, i.e. we vary $P_1,\dots ,P_{e-2a}\in \mathbb {P}^m(\mathbb {R})$ and $Q_1,\dots
,Q_a\in  \mathbb {P}^m(\mathbb {C})\setminus \mathbb {P}^m(\mathbb {R})$ with the restrictions $P_i\ne P_j$ for all $i\ne j$,
$Q_i\ne Q_j$ for all $i\ne j$ and $Q_h\ne \sigma (Q_k)$ for all $h,k$. Notice that $\dim (\langle \nu _d(J)\rangle )= e-1$ for
any such $J$, because $d\ge e-1$. If $d \ge 2e-1$ and $J\ne J'$, then we also get $\langle \nu _d(J)\rangle \cap \langle \nu
_d(J')\rangle = \langle \nu _d(J\cap J')\rangle$. Hence in this way we get an $(me+e-1)$-dimensional real manifold of $\sigma
_e(X_{m,d}(\mathbb {C}))(\mathbb {R})$. We will say that $(e-2a,a)$ is the type of $J$ or of this $(me+e-1)$-dimensional real
manifold or of any $P\in \langle \nu _d(J)\rangle (\mathbb {R})\setminus (\cup _{J'\subsetneq J} \langle \nu (J')\rangle )$.
\end{remark}

\begin{lemma}\label{v0}
Fix positive integers $m, t$. Let $S\subset \mathbb {P}^m(\mathbb {C})$ be a finite subset such that $\langle S\rangle =\mathbb {P}^m(\mathbb {C})$
and $h^1(\mathcal {I}_S(t)) >0$. Then $\sharp (S) \ge t+m+1$.
\end{lemma}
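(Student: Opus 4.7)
I would prove the contrapositive. Assume $\langle S\rangle = \mathbb{P}^m(\mathbb{C})$ and set $n := \sharp(S)$; assuming $n \le t+m$, the goal is to show $h^1(\mathcal{I}_S(t)) = 0$. Since $H^1(\mathcal{O}_{\mathbb{P}^m}(t)) = 0$, the ideal sequence $0 \to \mathcal{I}_S(t) \to \mathcal{O}_{\mathbb{P}^m}(t) \to \mathcal{O}_S \to 0$ forces the equivalence ``$h^1(\mathcal{I}_S(t))=0$'' iff ``$S$ imposes $n$ independent conditions on $|\mathcal{O}_{\mathbb{P}^m}(t)|$''. By the standard pointwise criterion, the latter is equivalent to exhibiting, for every $P \in S$, a degree $t$ hypersurface containing $S \setminus \{P\}$ but not $P$.

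For a fixed $P \in S$ I would build such a hypersurface as a product of hyperplanes. If $\langle S \setminus \{P\}\rangle \neq \mathbb{P}^m(\mathbb{C})$, then since $\langle S\rangle = \mathbb{P}^m(\mathbb{C})$ this span is itself a hyperplane missing $P$, which (multiplied by $t-1$ further hyperplanes avoiding $P$) already does the job. Otherwise $S \setminus \{P\}$ contains $m+1$ points $Q_1,\dots,Q_{m+1}$ in linear general position; writing $P = \sum_k \alpha_k Q_k$ in homogeneous coordinates, some coefficient $\alpha_{k_0}$ is nonzero, and the hyperplane $H := \langle Q_j : j \neq k_0\rangle$ then avoids $P$ while absorbing $m$ of the points of $S \setminus \{P\}$. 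Each of the remaining $\le n - 1 - m$ points $P_j \in S \setminus \{P\}$ off $H$ can be absorbed by a single hyperplane $L_j$ through $P_j$ not containing $P$. The resulting union $H \cup \bigcup_j L_j$ has degree at most $1 + (n-1-m) = n - m \le t$, and multiplying by further hyperplanes avoiding $P$ brings the degree up to exactly $t$ without reintroducing $P$.

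Running this construction for every $P \in S$ yields the independent-conditions property, so $h^1(\mathcal{I}_S(t)) = 0$, contrary to hypothesis. Hence $\sharp(S) \ge t+m+1$. The argument is essentially bookkeeping; the only mildly delicate point is guaranteeing that some $m$-element subset of $S \setminus \{P\}$ spans a hyperplane avoiding $P$, and this is exactly what the linear combination $P = \sum_k \alpha_k Q_k$ with general-position $Q_k$'s provides. I do not foresee a significant obstacle beyond careful case analysis when $\langle S \setminus \{P\}\rangle$ happens to be a proper subspace.
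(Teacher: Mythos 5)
Your argument is correct, but it is a genuinely different route from the paper's. You prove the contrapositive directly: a spanning set of at most $t+m$ points imposes independent conditions on degree $t$ forms, by exhibiting for each $P\in S$ a separating hypersurface built as a union of hyperplanes (one hyperplane absorbing $m$ independent points of $S\setminus \{P\}$ while avoiding $P$, one hyperplane per leftover point, plus padding hyperplanes through none of $S$'s special point $P$). The paper instead argues by induction on $m$: it chooses a hyperplane $H$ spanned by $m$ points of $S$, applies the inductive hypothesis inside $H$ when $h^1(H,\mathcal{I}_{S\cap H}(t))>0$, and otherwise uses the Castelnuovo (residual) exact sequence to get $h^1(\mathcal{I}_{S\setminus S\cap H}(t-1))>0$, hence at least $t+1$ points off $H$ and at least $m$ on it. Your construction is more elementary and self-contained (it does not invoke \cite{bgi}, Lemma 34, nor any residuation), while the paper's proof reuses exactly the exact-sequence machinery that drives the rest of its arguments, which is why it is phrased that way. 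Two small points to tighten in your write-up: in the degenerate case you should note \emph{why} $\langle S\setminus\{P\}\rangle$ is a hyperplane not containing $P$ (removing one point from a spanning set drops the dimension of the span by at most one, and if $P$ lay in that span then $S\setminus\{P\}$ would already span $\mathbb{P}^m$), and in the main case what you need and get is simply $m+1$ linearly independent points $Q_1,\dots,Q_{m+1}$ of $S\setminus\{P\}$ rather than points in ``linear general position''; with those remarks the bookkeeping $1+(n-1-m)\le t$ closes the proof as you say.
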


\begin{proof}
The lemma is true if $m=1$. Hence we may assume $m\ge 2$ and use induction on $m$. The case $t=0$ is true for arbitrary $m$, because $S$ spans $\mathbb {P}^m(\mathbb {C})$. Since $S$ spans $\mathbb {P}^m(\mathbb {C})$, there
is a hyperplane $H\subset \mathbb {P}^m(\mathbb {C})$ spanned by $m$ points of $S$. First assume $h^1(H,\mathcal {I}_{H\cap S}(t)) >0$. Since
$S\cap H$ spans $H$, the inductive
assumption gives $\sharp (H\cap S) \ge t+m$. Since $S$ spans $\mathbb {P}^m(\mathbb {C})$, we have $\sharp (S)>\sharp (S\cap H)\ge m+t$. Now assume $h^1(H,\mathcal {I}_{H\cap S}(t))=0$. The Castelnuovo's exact sequence
$$0 \to \mathcal {I}_{S\setminus S\cap H}(t-1) \to \mathcal {I}_S(t) \to \mathcal {I}_{S\cap H,H}(t) \to 0$$
gives $h^1(\mathcal {I}_{S\setminus S\cap H}(t-1))>0$. Hence $\sharp (S\setminus S\cap H) \ge t+1$ (obvious if $t=1$, \cite{bgi}, Lemma 34, if $t-1>0$). Since $\sharp (S\cap H) \ge m$, we get $\sharp (S) \ge t+m+1$.
\end{proof}

We recall the following weak form of \cite{b0}, Theorem 1.

\begin{lemma}\label{vv0}
Fix positive integers $m, t$. Let $S\subset \mathbb {P}^m(\mathbb {C})$, $m\ge 4$, be a finite subset such that $h^1(\mathcal {I}_S(t)) >0$. Assume $\sharp (S) \le 4t+m-5$. Then either there is a line $T\subset \mathbb {P}^m(\mathbb {C})$ such that $\sharp (S\cap T) \ge t+2$
or there is a plane $E\subset \mathbb {P}^m(\mathbb {C})$ such that $\sharp (E\cap S) \ge 2t+2$ or there is a $3$-dimensional linear subspace $F\subset \mathbb {P}^m(\mathbb {C})$ such that $\sharp (F\cap S) \ge 3t+2$.
\end{lemma}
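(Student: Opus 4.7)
The proof I would write proceeds by induction on $t$, with a nested induction on $m$, based on the Castelnuovo restriction sequence used in the proof of Lemma \ref{v0}. The base case $t=1$ reduces to linear algebra: the hypothesis $\sharp(S) \le m-1$ combined with $h^1(\mathcal{I}_S(1))>0$ forces $\sharp(S) > \dim\langle S\rangle + 1$, which by the standard pigeonhole/general-position analysis gives three collinear points, i.e.\ a line $T$ with $\sharp(T\cap S)\ge 3 = t+2$.

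For the inductive step, choose a hyperplane $H\subset \mathbb{P}^m(\mathbb{C})$ maximizing $\sharp(S\cap H)$, and consider
$$0 \to \mathcal{I}_{S\setminus S\cap H}(t-1) \to \mathcal{I}_S(t) \to \mathcal{I}_{S\cap H,H}(t) \to 0.$$
The cohomology sequence and $h^1(\mathcal{I}_S(t))>0$ force at least one of $h^1(H,\mathcal{I}_{S\cap H}(t))>0$ or $h^1(\mathcal{I}_{S\setminus S\cap H}(t-1))>0$. In the first subcase, $S\cap H$ lies in $H\cong \mathbb{P}^{m-1}$ and, when the numerical bound $\sharp(S\cap H)\le 4t+(m-1)-5$ holds, the induction on $m$ yields a line, plane, or $\mathbb{P}^3$ inside $H$ containing the required number of points of $S\cap H\subset S$, which is exactly the conclusion we want in $\mathbb{P}^m$. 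If $\sharp(S\cap H)$ is too large for induction, then $S\cap H$ is so concentrated in $H$ that Lemma \ref{v0}, applied inside successive linear spans of $S\cap H$, already produces the needed linear subspace by a direct count.

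In the second subcase, set $S':=S\setminus S\cap H$. If $\sharp(S\cap H)\ge 4$ (which will be forced by the maximality of $\sharp(S\cap H)$ together with the hypothesis $\sharp(S)\le 4t+m-5$), then $\sharp(S')\le 4(t-1)+m-5$, so the induction on $t$ produces a line, plane, or $\mathbb{P}^3$ meeting $S'$ in $\ge(t-1)+2$, $\ge 2(t-1)+2$, or $\ge 3(t-1)+2$ points respectively. One then upgrades by picking $H$ to contain this linear subspace (which is legitimate by the maximality choice), so that the points of $S\cap H$ on the subspace contribute the missing slots and raise the count to the required $t+2$, $2t+2$, or $3t+2$.

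The main obstacle is the numerical bookkeeping in this last step: matching the subspace obtained from the inductive hypothesis on $S'$ (at degree $t-1$) with enough extra points coming from $S\cap H$ to reach the sharp thresholds $t+2$, $2t+2$, $3t+2$. This is precisely the kind of residue-scheme juggling that is done in \cite{b0}, Theorem 1; since the statement here is presented as a weak form of that result, I would in practice invoke \cite{b0} directly rather than redo the bookkeeping from scratch.
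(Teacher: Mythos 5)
The paper gives no proof of this lemma at all: it is stated as a weak form of \cite{b0}, Theorem 1, and that citation is the entire justification. So your closing remark --- just invoke \cite{b0} --- is exactly what the paper does, and as a citation it is fine. The problem is the inductive argument you present as the would-be proof, which has two genuine gaps. First, the base case is wrong: for $t=1$ the hypotheses are $\sharp (S)\le m-1$ and $h^1(\mathcal {I}_S(1))>0$, i.e. $S$ is linearly dependent, and linear dependence does not yield three collinear points. In fact the statement itself fails at $t=1$: take $S$ to be $6$ points in general position inside a $\mathbb {P}^4\subset \mathbb {P}^m$ with $m\ge 7$; then $h^1(\mathcal {I}_S(1))=1>0$ and $\sharp (S)=6\le m-1$, but no line contains $3$ of the points, no plane contains $4$, and no $3$-dimensional linear subspace contains $5$. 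So an induction on $t$ cannot start where you start it; the actual result in \cite{b0} carries assumptions and a case analysis that the sketch ignores.

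Second, the inductive step in your ``second subcase'' is not an argument. The hyperplane $H$ is fixed (by maximality of $\sharp (S\cap H)$) before anything is known about $S':=S\setminus S\cap H$; once induction hands you a line, plane or $\mathbb {P}^3$ meeting $S'$ in $(t-1)+2$, $2(t-1)+2$ or $3(t-1)+2$ points, there is no reason that $S\cap H$ contributes the extra $1$, $2$ or $3$ points \emph{on that same subspace} needed to reach $t+2$, $2t+2$, $3t+2$; generically it contributes none. Nor can you ``pick $H$ to contain this linear subspace'' after the fact: the subspace was produced from the residual set of the $H$ you already chose, and replacing $H$ by another hyperplane changes $S'$ and invalidates the exact sequence on which the induction ran --- one must restart the whole descent with the new hyperplane, which is exactly the iterative bookkeeping carried out in the proof of Lemma \ref{ww1} and in \cite{b0}, not a one-line upgrade. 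So as a self-contained proof the proposal does not go through; as a deferral to \cite{b0}, Theorem 1, it coincides with what the paper itself does.
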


\begin{lemma}\label{v1}
Fix integers $k \ge 1$, $e\ge 0$, $t \ge 1$, $m$ such that
$m \ge 2k-1+e$. Fix $k$ lines $L_1,\dots ,L_k\subset \mathbb {P}^{m}(\mathbb {C})$, a set $E\subset L_1\cup \cdots \cup L_k$ such that $\sharp (E\cap L_i) \le t+1$ for all $i$
and a set $F\subset \mathbb {P}^{m}(\mathbb {C})$ such that  $\sharp (F) = e$ and  $\dim (\langle L_1\cup \cdots \cup L_k\cup F\rangle ) =2k-1+e$. Then $h^1(\mathcal {I}_{E\cup F}(t))=0$.
\end{lemma}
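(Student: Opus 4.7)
I would induct on $e$, reducing the claim for $(k,e,t)$ to the same claim for $(k,e-1,t)$ via a Castelnuovo sequence, and handle the base case $e=0$ by reducing to a vanishing for the reducible curve $C:=L_1\cup\cdots\cup L_k$. The dimension hypothesis forces $\dim\langle L_1\cup\cdots\cup L_k\rangle=2k-1$ (the lines are in linear general position) and places the points of $F$ in general position with respect to $\langle L_1\cup\cdots\cup L_k\rangle$, so dropping one point of $F$ drops the span by exactly one.

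For the \emph{inductive step} ($e\ge 1$), pick $p\in F$. Then $\Lambda:=\langle L_1\cup\cdots\cup L_k\cup(F\setminus\{p\})\rangle$ has dimension $2k-2+e$ and does not contain $p$. Choose any hyperplane $H\subset\mathbb{P}^m(\mathbb{C})$ with $H\supset\Lambda$ and $p\notin H$; such an $H$ exists because $m\ge 2k-1+e$. The trace $(E\cup F)\cap H = E\cup(F\setminus\{p\})$ while the residual with respect to $H$ is the single point $\{p\}$, so the Castelnuovo sequence
\[
0\to \mathcal{I}_{\{p\}}(t-1)\to \mathcal{I}_{E\cup F}(t)\to \mathcal{I}_{E\cup(F\setminus\{p\}),H}(t)\to 0,
\]
combined with $h^1(\mathcal{I}_{\{p\}}(t-1))=0$ (since $t-1\ge 0$) and the inductive hypothesis applied inside $H\cong\mathbb{P}^{m-1}$ to the triple $(k,e-1,t)$, yields $h^1(\mathcal{I}_{E\cup F}(t))=0$.

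For the \emph{base case} $e=0$, I would first pass to $V:=\langle C\rangle=\mathbb{P}^{2k-1}$ using the standard vanishings $h^i(\mathbb{P}^m,\mathcal{I}_V(s))=0$ for $i\ge 1$, $s\ge 0$. Since the $L_i$ are pairwise disjoint one has $\mathcal{O}_C=\bigoplus_{i=1}^k\mathcal{O}_{L_i}$ and hence
\[
h^1(C,\mathcal{I}_{E,C}(t))=\sum_{i=1}^k h^1\!\left(L_i,\mathcal{I}_{E\cap L_i,L_i}(t)\right)=0,
\]
each summand vanishing because $\sharp(E\cap L_i)\le t+1$ on $L_i\cong\mathbb{P}^1$. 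The sequence $0\to \mathcal{I}_C(t)\to \mathcal{I}_E(t)\to \mathcal{I}_{E,C}(t)\to 0$ in $V$ then reduces the problem to $h^1(V,\mathcal{I}_C(t))=0$, which I would close by a secondary induction on $t$: the case $t=1$ follows from $h^0(\mathcal{I}_C(1))=0$ (no hyperplane contains the spanning $C$) together with $h^0(\mathcal{O}_C(1))=2k=h^0(\mathcal{O}_V(1))$; and for $t\ge 2$ a generic hyperplane $H'\subset V$ cuts $C$ in $k$ points $q_1,\dots,q_k$ that span a $\mathbb{P}^{k-1}$ in $H'\cong\mathbb{P}^{2k-2}$ and so impose independent conditions on $H^0(H',\mathcal{O}(t))$, whence the Castelnuovo sequence yields $h^1(\mathcal{I}_C(t))\le h^1(\mathcal{I}_C(t-1))$.

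The delicate point I expect to be the main obstacle is the general-position claim just invoked: for a generic hyperplane $H'\subset V$ the $k$ intersection points $H'\cap L_i$ span a $\mathbb{P}^{k-1}$ in $H'$. This should follow from the hypothesis that the lines jointly span $\mathbb{P}^{2k-1}$, since the natural incidence map from the $(2k-1)$-dimensional linear system of hyperplanes of $V$ to $L_1\times\cdots\times L_k$ sending $H'$ to its tuple of intersection points is dominant, and the locus where those $k$ points fail to be in linear general position is a proper closed subvariety of the target.
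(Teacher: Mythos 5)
Your argument is correct, and its two halves compare to the paper as follows. The inductive step on $e$ (peel off one point of $F$ with a Castelnuovo sequence whose residual is a single point and whose trace lies in a hyperplane containing $\langle L_1\cup\cdots\cup L_k\cup (F\setminus\{p\})\rangle$) is essentially identical to step (a) of the paper's proof, which works inside $\Lambda=\langle L_1\cup\cdots\cup L_k\cup F\rangle$ and inducts on $k+e$. Where you genuinely diverge is the base case $e=0$: the paper runs a second induction on $k$, cutting with a hyperplane $M\supset\langle L_1\cup\cdots\cup L_{k-1}\rangle$ through one point of $E\cap L_k$, so that the residual has at most $t$ points and the trace is again a configuration of the same shape; you instead split off the curve $C=L_1\cup\cdots\cup L_k$ via $0\to\mathcal I_C(t)\to\mathcal I_E(t)\to\mathcal I_{E,C}(t)\to 0$, kill the trace line by line using $\sharp(E\cap L_i)\le t+1$, and prove the stronger structural statement $h^1(\mathcal I_C(t))=0$ for $k$ independent skew lines spanning $\mathbb P^{2k-1}$ by induction on $t$. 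Your route is a bit longer but buys a reusable fact (good postulation of the line configuration itself, for all $t\ge 1$, independent of $E$), while the paper's step is a quicker one-shot reduction whose trace term is disposed of by the same induction. Finally, the point you flag as delicate is not delicate at all: since $\dim\langle L_1\cup\cdots\cup L_k\rangle=2k-1$ forces each $L_{j+1}$ to be disjoint from $\langle L_1\cup\cdots\cup L_j\rangle$, \emph{any} transversal $(p_1,\dots,p_k)$ with $p_i\in L_i$ is automatically linearly independent (by induction on $j$, $p_{j+1}\notin\langle p_1,\dots,p_j\rangle\subset\langle L_1\cup\cdots\cup L_j\rangle$), so no dominance or genericity argument for the incidence map is needed --- you only need the generic hyperplane not to contain any $L_i$, which is clear.
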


\begin{proof}
Set $N:= \langle L_1\cup \cdots \cup L_k\rangle$ and $\Lambda := \langle N\cup F\rangle$. Since $\dim (\Lambda )=2k-1+e$, we have
$\dim (N) =2k-1$ and $F\cap N =\emptyset$. Since
$E\cup F\subset \Lambda$ and $h^1(\Lambda ,\mathcal {I}_{E\cup F,\Lambda}(t)) = h^1(\mathcal {I}_{E\cup F}(t))$, it is sufficient to prove
the lemma
in the case $m = 2k-1+e$. Since the case $k=1$ and $e=0$ is obvious, we may assume $k+e \ge 2$ and use induction on the integer $k+e$. 

\quad (a) Assume $e>0$ and fix $P\in F$. Set $F':= F\setminus \{P\}$. Notice that $H:= \langle L_1\cup \cdots \cup L_k\cup F' \rangle$ is a hyperplane of $\Lambda$ and that $\{P\} = (E\cup F)\setminus (E\cup F)\cap H$.
Hence we have an exact sequence on $\Lambda$:
\begin{equation}\label{eqv1}
0 \to \mathcal {I}_{\{P\}}(t-1) \to \mathcal {I}_{E\cup F}(t) \to \mathcal {I}_{E\cup F',H}(t) \to 0
\end{equation}
Since $t-1\ge 0$, we have $h^1(\mathcal {I}_{\{P\}}(t-1))=0$. The inductive assumption gives $h^1(H,\mathcal {I}_{E\cup F',H}(t))=0$. Hence (\ref{eqv1}) gives
$h^1(\mathcal {I}_{E\cup F}(t))=0$.

\quad (b) Assume $e=0$ and hence $k\ge 2$ and $F=\emptyset$. Set $N':= \langle L_1\cup \cdots \cup L_{k-1}\rangle$. Since
$\dim (N) =2k-1$, we have $\dim (N')=2k-3$. Hence there is a hyperplane $M$ containing $N'$
 and a point of $E\cap L_k$ (or just containing $N'$ if $E\cap L_k=\emptyset$). Since $\sharp (E\setminus E\cap M) \le t$, we
 have $h^1(\mathcal {I}_{E\setminus E\cap M}(t-1)) =0$. Look at the following exact sequence on $N$:
\begin{equation}\label{eqv2}
0 \to \mathcal {I}_{E\setminus E\cap M}(t-1) \to \mathcal {I}_E(t) \to \mathcal {I}_{E\cap M,M}(t) \to 0
\end{equation}
Since $\sharp (E\setminus E\cap M)\le t$, we have $h^1(\mathcal {I}_{E\setminus E\cap M}(t-1))=0$ (e.g., by \cite{bgi}, Lemma 34). Hence (\ref{eqv2}) gives
$h^1(\mathcal {I}_E(t))=0$.
\end{proof}

\begin{lemma}\label{ww1}
Fix integers $a\in \{1,2,3\}$ and $e$ with $0 \le e \le 7-2a$. Assume $m\ge 2a+e-1$
and $d\ge 4a+2e$. Fix $Q_i\in   \mathbb {P}^m(\mathbb {C})$, $1\le i \le a$, such
that $\sharp (\{Q_1,\dots ,Q_a,\sigma (Q_1),\dots ,\sigma (Q_a)\})=2a$ and $\dim (\langle \{Q_1,\dots ,Q_a,\sigma (Q_1),\dots ,\sigma (Q_a)\}\rangle
=2a-1$. Set $D_i:= \langle \{Q_i,\sigma (Q_i)\rangle$. If $e=0$, then set $A:= \{Q_1,\dots ,Q_a,\sigma (Q_1),\dots ,\sigma (Q_a)\}$. If $e>0$, then take
$P_1, \cdots P_e\in \mathbb {P}^m(\mathbb {R})$
such that  $\sharp (A)=2a+e$, where
$$A:=  \{P_1,\dots ,P_e,Q_1,\dots ,Q_a,\sigma (Q_1),\dots ,\sigma (Q_a)\}.$$ Fix $P\in \langle \nu _d(A)\rangle (\mathbb {R})$ such that $P\notin \langle \nu _d(A')\rangle$ for any $A'\subsetneq A$. Then $A$ is the only scheme
evincing $r_{\mathbb {C}}(P)$, $r_{\mathbb {R}}(P)=2a+e$ and every $B\subset \mathbb {P}^m(\mathbb {R})$ evincing $r_{\mathbb {R}}(P)$ contains $d$ points on each $D_i$
and (if $e>0$), $P_1,\dots ,P_e$.
\end{lemma}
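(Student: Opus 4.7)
The plan is to argue in three steps: first identify $A$ as the unique complex evincing scheme, then establish an upper bound on $r_{\mathbb{R}}(P)$ by an explicit real decomposition, and finally derive the matching lower bound by analyzing any minimal real decomposition $B$.

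The first step uses Remark \ref{w1}. The hypothesis $d \ge 4a+2e \ge 2(2a+e)-1$ gives both the correct dimension $\dim\langle \nu_d(A)\rangle = 2a+e-1$ (via Lemma \ref{v0}) and the uniqueness criterion cited in Remark \ref{w1}. Since $P$ lies in $\langle \nu_d(A)\rangle$ but in no proper subspan, a generic such $P$ has border rank exactly $\sharp(A) = 2a+e$, and Remark \ref{w1} with $b = 2a+e$ forces $A$ to be the unique scheme evincing the border rank, hence also the complex rank. For the upper bound on $r_{\mathbb{R}}(P)$, decompose $\langle \nu_d(A)\rangle(\mathbb{R})$ as the direct sum $\bigoplus_{i=1}^{a} \langle \nu_d(\{Q_i,\sigma(Q_i)\})\rangle(\mathbb{R}) \oplus \langle \nu_d(\{P_1,\ldots,P_e\})\rangle(\mathbb{R})$; writing $P$ according to this decomposition, each component on $D_i$ is a generic real element of a bivariate Veronese span of a conjugate pair, so Theorem \ref{u1} applied with $b=2$, $a=1$, $e=0$ provides a real decomposition of size $d$ inside $D_i$. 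Summing yields $r_{\mathbb{R}}(P) \le ad+e$.

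The lower bound is the main content. Fix $B \subset \mathbb{P}^m(\mathbb{R})$ evincing $r_{\mathbb{R}}(P)$ and set $S := A \cup B$. Since $P$ belongs to $\langle \nu_d(A)\rangle \cap \langle \nu_d(B)\rangle$ but to no proper subspan of either, this intersection is nontrivial and forces $h^1(\mathcal{I}_S(d)) > 0$. With $\sharp(A) \le 7$ and the bound $\sharp(B) \le ad+e$ just obtained, $\sharp(S)$ remains within the range of Lemma \ref{vv0} when $m \ge 4$ and of Lemma \ref{v0} for small $m$. Lemma \ref{vv0} yields a line, plane, or $3$-space containing an excess of points of $S$; the uniqueness of $A$ from step one, combined with the dimension hypothesis $m \ge 2a+e-1$, forces this locus to be one of the lines $D_i$. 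A line-by-line residuation then reduces the situation on each $D_i$ to the bivariate case, where Theorem \ref{u1} forces $\sharp(B \cap D_i) \ge d$. Finally, Lemma \ref{v1} applied to the residual configuration after removing the $D_i$-contributions rules out any $P_j$ not already in $B$, completing the proof that $\{P_1,\ldots,P_e\} \subset B$.

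The main obstacle is matching Lemma \ref{vv0}'s structural output to the geometric configuration of the $D_i$ and $P_j$: when $a \in \{2,3\}$ several lines carry conjugate pairs, and a plane or $3$-space from Lemma \ref{vv0} could a priori span multiple $D_i$'s at once. I would handle this by induction on $a$ after fixing one line extracted by Lemma \ref{vv0}, reducing the problem to a smaller configuration and invoking the uniqueness from step one to preclude spurious incidences.
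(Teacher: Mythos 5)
Your outline does track the paper's skeleton: uniqueness of $A$ via Remark \ref{w1}, the upper bound $r_{\mathbb{R}}(P)\le ad+e$ by splitting $P$ along the spans $\langle \nu _d(D_i)\rangle (\mathbb{R})$, and the lower bound starting from $h^1(\mathcal{I}_{A\cup B}(d))>0$ (\cite{bb1}, Lemma 1). But the heart of the lemma --- showing that this non-vanishing forces $B$ to contain at least $d$ points on each $D_i$ and all the $P_j$ --- is precisely what your proposal does not prove, and the shortcuts you invoke fail. First, Lemma \ref{vv0} needs $m\ge 4$, and for the remaining cases Lemma \ref{v0} only gives the cardinality bound $\sharp (S)\ge d+m+1$, which is far below $\sharp (A\cup B)\approx ad+2a+2e$ and yields neither a contradiction nor any structure; e.g.\ the case $a=2$, $e=0$, $m=3$, where $\sharp (A\cup B)$ can exceed $2d+1$ so that \cite{bgi}, Lemma 34 does not apply directly either, is covered by none of your tools --- the paper handles it by the iterated hyperplane residuation of steps (c)--(d). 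Second, when Lemma \ref{vv0} does apply its output may be a plane with $\ge 2d+2$ points or a $3$-dimensional space with $\ge 3d+2$ points of $A\cup B$; the uniqueness of $A$ says nothing about where the points of $B$ sit, so it cannot ``force this locus to be one of the lines $D_i$''. Turning such concentrations into $A\cup B\subset D_1\cup \cdots \cup D_a\cup \{P_1,\dots ,P_e\}$ is the long case analysis of the paper (hyperplanes through the extracted lines, the quadric argument of step (d5), and repeated use of \cite{bb3}, Lemma 5, whose hypothesis $h^1(\mathcal{I}_{W_0\setminus W_0\cap H}(d-1))=0$ is exactly what Lemma \ref{v1} is designed to verify); your one-sentence ``line-by-line residuation'' and ``induction on $a$'' do not supply it.

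There are also two misuses of Theorem \ref{u1}. It is a typicality statement (rank $d$ on some euclidean-open subset), so it gives neither the upper bound $\le d$ for the specific points $O_i\in \langle \nu _d(D_i)\rangle (\mathbb{R})$ determined by $P$ (the correct tool is \cite{co}, Proposition 2.1, as in the paper), nor the lower bound $\sharp (B\cap D_i)\ge d$ for an arbitrary real decomposition $B$: the latter needs the Claim-type argument from the proof of Theorem \ref{w2}, and only after one already knows that the relevant part of $B$ lies on $D_i$, which is again the missing structural step. Likewise, concluding $\{P_1,\dots ,P_e\}\subset B$ requires the equality $A\setminus A\cap H=B\setminus B\cap H$ from \cite{bb3}, Lemma 5 (after a suitable $h^1$-vanishing), not Lemma \ref{v1} by itself. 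So the proposal is a plausible plan whose genuinely difficult middle portion is asserted rather than proved.
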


\begin{proof}
Notice that $\dim (\langle \nu _d(A)\rangle )=\sharp (A)-1$. Since $A$ is a finite set, it has finitely many proper subsets.
Hence $P$ exists and the set of all such points $P$ is an open and dense subset of the real vector space $\langle \nu
_d(A)\rangle (\mathbb {R})$. The set $A$ is unique (\cite{bgl}, Theorem 1.5.1, or Remark \ref{w1}). 

\quad (a) Fix $A'\subsetneq A$ such that $A'\ne \emptyset$. Since $P\in \langle \nu _d(A)\rangle$ and $P\notin \langle \nu _d(A_1)\rangle$ for any $A_1\subsetneq A$, there
is a unique $P'\in \langle \nu _d(A')\rangle$ such that $P\in \langle \{P'\}\cup \nu _d(A\setminus A')\rangle$. Since
$\sigma (A) =A$ and $\sigma (P)=P'$, if $A'$ is
defined over $\mathbb {R}$ the uniqueness of $P'$
implies $\sigma (P')=P'$.

\quad (b) Take $B\subset \mathbb {P}^m(\mathbb {R})$ evincing $r_{\mathbb {R}}(P)$. Hence $P\notin \langle \nu _d(B')\rangle$ for
any $B'\subsetneq B$. Since $B\ne A$, \cite{bb1}, Lemma 1, gives $h^1(\mathcal {I}_{A\cup B}(d)) >0$. Set $W_0:= A\cup B$. Set $D_i := \langle \{Q_i,\sigma (Q_i)\}\rangle$.  Each $D_i$ is a line defined over $\mathbb {R}$. We have $\langle \cup _{i=1}^{a} D_i\rangle
= \langle \{Q_1,\sigma (Q_1),\dots ,Q_a,\sigma (Q_a)\}\rangle$ and hence 
$$\dim (\langle \cup _{i=1}^{a} D_i\rangle )
= \dim (\langle \{Q_1,\sigma (Q_1),\dots ,Q_a,\sigma (Q_a)\rangle )=2a-1.$$ By step (a) there is a unique $O_i\in \langle \nu _d(D_i)\rangle (\mathbb {R})$
such that $P\in \langle \{O_1,\dots ,O_a\}\cup \nu _d(\{P_1,\dots ,P_e\})\rangle$. Since each $O_i$ has rank $\le d$ with respect to the rational normal
curve $\nu _d(D_i)$ (\cite{co}, Proposition 2.1), we get $\sharp (B) \le e +ad$. Hence $\sharp (A\cup B) \le 2e +ad +2a$. If $a\le 2$, then we get $\sharp (A\cup B) \le 2d+10$.
If $a=3$, then $\sharp (A\cup B) \le 3d+7$. If $(a,e)=(3,1)$ (resp. $(a,e) =(3,0)$) we have $d\ge 14$ (resp. $d\ge 12$) and $m\ge 6$ (resp. $m\ge 5$). Hence if $a=3$ we have
$2m-4 +3d > \sharp (A\cup B)$, unless $a=3$, $e=0$ and $m=5$. 

\quad (c) Let $H_1\subset \mathbb {P}^m(\mathbb {C})$ be a hyperplane such that $b_1:= \sharp (W_0\cap H_1)$ is
maximal. Set $W_1:= W_0\setminus W_0\cap H_1$. For each integer $i\ge 2$ define recursively the hyperplane $H_i\subset \mathbb {P}^m(\mathbb {C})$, the integer $b_i$
and the set $W_i$ in the following way. Let $H_i\subset \mathbb {P}^m(\mathbb {C})$ be a hyperplane such that $b_i:= \sharp (W_{i-1}\cap H_i)$ is
maximal. Set $W_i:= W_{i-1}\setminus W_{i-1}\cap H_i$. For each integer $i\ge 1$ we have an exact sequence
\begin{equation}\label{eqw1}
0 \to \mathcal {I}_{W_i}(d-i) \to \mathcal {I}_{W_{i-1}}(d+1-i) \to  \mathcal {I}_{W_{i-1}\cap H_i,H_i}(d+1-i) \to 0
\end{equation}
Since $h^1(\mathcal {I}_{W_0}(d)) >0$, (\ref{eqw1}) implies the existence of an integer $i>0$ such that $h^1(H_i,\mathcal {I}_{W_{i-1}\cap H_i,H_i}(d+1-i)) >0$.
We call $g$ the minimal such an integer. The sequence $\{b_i\}_{i\ge 0}$ is non-decreasing. Any $m$ points
of $\mathbb {P}^m(\mathbb {C})$ are contained in a hyperplane.
Hence if
$b_i\le m-1$, then
$b_{i+1}=0$. Since $\sharp (W_0) \le 2a+ad+e$ and $m \ge 2a+e-1$, we get $b_i=0$ for all $i>(d+2)/2$. Hence $g\le (d+2)/2$.
Since $h^1(H_g,\mathcal {I}_{W_{g-1}\cap H_g}(d+1-g)) >0$, we have $b_g\ge d+3-g$ (\cite{bgi}, Lemma 34). 
Since $b_i\ge b_g$ for all $i\le g$, we get $g(d+3-g) \le 2a+ad+e$. Until step (d) we assume $g\ge 2$. 
Assume for the moment $b_g\ge 2d+4-2g$. Since $b_i\ge b_g$ for all $i\le g$, we get $g(2d+4-2g) \le 2a+ad+e \le ad+7$. Set $\psi (t) := 2t(d+2-t)$.
The real function $\psi (t)$ is increasing if $t\le (d+2)/2$ and decreasing if $t> (d+2)/2$. Since $2 \le g \le (d+2)/2$, $\psi (2) =4d > ad+7$
and $\psi (d+2)/2 = (d+2)^2 > ad+7$, we get a contradiction. Hence $b_g \le 2(d+1-g)+1$. By \cite{bgi}, Lemma 34, there is a line $T\subset H_i$ such
that $\sharp (T\cap W_{g-1})\ge d+3-g$. Since $b_g >0$, $W_{g-2}$ spans $\mathbb {P}^m$. Hence $b_{g-1} \ge (m-2)+d+3-g$. Since $b_i \ge b_{g-1}$ for
all $i\le g-1$, we get $2a+ad+e \le (g-1)(m-2) +g(d+3-g)$. Set $\phi (t):= t(d+1+m -t) -m+2$. Since $\phi (t)$ is increasing if $t\le (d+1+m)/2$, $2 \le g \le (d+2)/2$,
and $\phi (3) = 2m-4+3d > \sharp (A\cup B)$ (unless $m=5$, $a=3$ and $e=0$), we get $g=2$, unless $a=3$, $e=0$ and $m=5$; in this case we only get $g \le 3$.

\quad (c1) Assume for the moment $m=5$, $a=3$, $e=0$ and $g=3$. We have $b_3 \ge d$ and $b_1\ge b_2\ge d+3$. Hence $b_4=0$, $b_3=d$, $b_2 = b_1 = d+3$ and
$W_2\subset T$ and $\sharp (W_2) =d$. Let $H'_1 \subset \mathbb {P}^5(\mathbb {C})$ be a hyperplane containing $T$ and with $b'_1:= \sharp (H'_1\cap W_0)$ maximal
among the hyperplanes containing $T$. Set $W'_1:= W_0\setminus W_0\cap H'_1$. If $h^1(H'_1,\mathcal {I}_{W_0\cap H'_1,H'_1}(d)) >0$, then go to step (d). Now assume $h^1(H'_1,\mathcal {I}_{W_0\cap H'_1,H'_1}(d))=0$.
From the exact sequence (\ref{eqw1}) with $i=1$ and $H'_1$ instead of $H_1$ we get
$h^1(\mathcal {I}_{W'_1}(d-1)) >0$. Since $W_0$ spans $\mathbb {P}^5$, we have $b'_1 \ge d+3$. Since
$b_1\ge b'_1$, we get $b'_1:= b_1$. Hence any $J\subset W_0\setminus W_0\cap T$ with $\sharp (J) \le 4$ is linearly independent.
Since $\sharp (W'_1) = 2d+3$ and $m=5$, we easily get $h^1(\mathcal {I}_{W'_1}(d-1)) =0$ (Lemma \ref{v1} is stronger), a contradiction. 

\quad (c2) Now assume $g=2$. Set $U_0:= W_0$. Let $M_1$ be a hyperplane containing $T$ and with $c_1:= \sharp (M_1\cap W_0)$ maximal among the hyperplanes
containing $T$. Set $U_1:= U_0\setminus U_0\cap M_1$. Define recursively the hyperplane $M_i\subset \mathbb {P}^m(\mathbb {C})$, the integer $c_i$
and the set $U_i$ in the following way. Let $M_i\subset \mathbb {P}^m(\mathbb {C})$ be a hyperplane such that $c_i:= \sharp (U_{i-1}\cap M_i)$ is
maximal. Set $U_i:= U_{i-1}\setminus U_{i-1}\cap M_i$. For each integer $i\ge 1$ we have an exact sequence like (\ref{eqw1}) with $U_{i-1}$ and $U_i$ instead
of $W_{i-1}$ and $W_i$ and with $M_i$ instead of $H_i$. Hence there is a minimal integer $f\ge 1$ such that $h^1(M_f,\mathcal {I}_{U_{f-1}\cap M_f,M_f}(d+1-f)) >0$.
Assume for the moment $f\ge 2$. Notice that $c_1 \ge d+m-1$, that $c_{i+1}\le c_i$ for all $i\ge 2$ and that $c_m \ge m$ for all $m<f$. As in the case with $g$
we get a contradiction, unless $f=2$. First assume $c_2 \ge 2d$. Hence $c_1+c_2 \ge 4d > 3d+7 \ge \sharp (A\cup B)$, a contradiction. Now assume
$c_2\le 2d-1$. Since $h^1(M_2,\mathcal {I}_{U_1\cap M_2,M_2}(d-1)) >0$, there is a line $T'\subset M_2$ such that
$\sharp (T'\cap U_0) \ge d+1$. Hence $\sharp ((T'\cup T)\cap W_0) \ge 2d+2$. Since $b_2>0$, $W_0$ spans $\mathbb {P}^m$. Hence there is a hyperplane
containing $T'\cup T$ and $m-4$ further points of $W_0\setminus W_0\cap (T\cup T')$. Hence $b_1 \ge 2d+m-2$. Since $b_2\ge d+1$, we get
$ad+7 \le 3d+m-1$. Hence $a=3$ and $5 \le m \le 8$. Take a hyperplane $H''$ containing $T'\cup T$ and at least $m-4$ further points and assume $h^1(H'',\mathcal {I}_{W_0\cap H'',H''}(d)) =0$.
An exact sequence (\ref{eqw1}) with $H''$ instead of $H_1$ gives $h^1(\mathcal {I}_{W_0\setminus W_0\cap H''}(d-1)) >0$. The proof of
the inequality $c_2\le 2d-1$ gives $\sharp (W_0\setminus W_0\cap H'') \le 2d-1$. Hence there is a line $T''$ such that $\sharp (T''\cap (W_0\setminus W_0\cap H''))\ge d+1$.
If $m\ge 6$, then there is a hyperplane containing $T\cup T'\cup T''$ and hence $b_1\ge 3d+3$; hence $b_1+b_2 \ge 4d+4 > 3d+7$, a contradiction. Now assume
$m=5$, $a=3$ and $e=0$ and $\dim (\langle T\cup T'\cup T' \rangle )=5$. See step (d5) for this case.

\quad (d) In this step we cover the case $g=1$, the case $g \ge 2$ and $f=1$ the case $h^1(H'_1,\mathcal {I}_{W_0\cap H'_1,H'_1}(d)) >0$,
the case $h^1(H''_1,\mathcal {I}_{W_0\cap H''_1,H''_1}(d)) >0$ and conclude the proof of the case $g=2$, $m=5$, $a=3$ and $e=0$. In these cases (except the last one)
we have a hyperplane $H\subset \mathbb {P
}^m(\mathbb {C})$ defined over $\mathbb {R}$ and with $h^1(H,\mathcal {I}_{H\cap W_0}(d)) >0$.

\quad (d1) Assume for the moment $W_0\subset H$. In this case we may use induction on $m$, because the inclusion $A\subset H$ implies $m > 2a+ad+e-1$. Hence
from now on we assume that $W_0 \setminus W_0\cap H \ne \emptyset$. We also reduce (decreasing if necessary $m$) to the case in which $W_0$ spans $\mathbb {P}^m(\mathbb {C})$.
Since $W_0$ spans $\mathbb {P}^m(\mathbb {C})$, we may take $H$ with the additional condition that $W_0\cap H$ spans $H$.

\quad (d2) Assume for the moment $h^1(\mathcal {I}_{W_0\setminus W_0\cap H}(d-1))=0$.
By \cite{bb3}, Lemma 5, we have $A\setminus A\cap H = B\setminus B\cap H$. Hence $\sharp (B\setminus B\cap H) \le e$ and $H$ contains
$\langle \{Q_1,\sigma (Q_1),\dots ,Q_a,\sigma (Q_a)\}\rangle$. Since $A\subset H$ and $W_0\nsubseteq H$, we have
$e>0$. Set $e' := \sharp (W_0\setminus W_0\cap H)$ and write $F_1:= W_0\setminus W_0\cap H$, $A_1:= A\setminus F_1$ and $B_1:= B\setminus F_1$.
Take either $\mathbb {K} = \mathbb {C}$ or $\mathbb {K} = \mathbb {R}$.
Since $d \ge 2a+e-1$, we have $\langle \nu _d(A_1)\rangle (\mathbb {K})\cap \langle \nu _d(F_1)\rangle (\mathbb {K}) = \emptyset$.
Since $d\ge ad+e-1$, we have $\langle \nu _d(B_1)\rangle (\mathbb {K})\cap \langle \nu _d(F_1)\rangle (\mathbb {K}) = \emptyset$.
Since $d\ge \sharp (A\cup B)-1$, we
have $\langle \nu _d(A)\rangle  (\mathbb {K})\cap \langle \nu _d(B)\rangle (\mathbb {K}) =\langle \nu _d(A\cap B)\rangle  (\mathbb {K})$.
Hence taking $A_1$ and $B_1$ instead of $A$ and $B$ we reduce to the case $(a',e') =(a,e-\sharp (F_1))$ and in this case we get that $W_0\setminus F_1$
is contained in a hyperplane (a case inductively solved). Hence from now on we assume $h^1(\mathcal {I}_{W_0\setminus W_0\cap H}(d-1))>0$. By \cite{bgi}, Lemma 34, we
have $\sharp (W_0) -\sharp (W_0\cap H) \ge d+1$. Lemma \ref{v0} gives $\sharp (W_0\cap H) \ge d+m-2$.

\quad (d3) Assume for the moment $\sharp (W_0) -\sharp (W_0\cap H) \ge 2d$. Hence
$2a+e+ad \le 3d+m-2$. Hence $a=3$ and $5\le m\le 6$. See Step (d7).

\quad (d4) Now assume $\sharp (W_0\setminus W_0\cap H) \le 2d-1$. By \cite{bgi}, Lemma 34, there is a line $T_1\subset \mathbb {P}^m(\mathbb {C})$ such
that $\sharp (T_1\cap (W_0\setminus W_0\cap H)) \ge d+1$. Let $R_1\subset \mathbb {P}^m(\mathbb {C})$ be a hyperplane such that $R_1\supset T_1$ and
$m_1:= \sharp (W_0\cap R_1)$ is maximal among the hyperplanes containing $T_1$. Since  $\sharp (T_1\cap (W_0\setminus W_0\cap H)) \ge d+1$, we
have $m_1\ge d+m-1$. Assume for the moment $h^1(\mathcal {I}_{W_0\setminus W_0\cap R_1}(d-1))=0$. By \cite{bb3}, Lemma 5, we have $A\setminus A\cap H = B\setminus B\cap H$. Hence $\sharp (B\setminus B\cap R_1) \le e$ and $R_1$ contains
$\langle \{Q_1,\sigma (Q_1),\dots ,Q_a,\sigma (Q_a)\}\rangle$. Since $A\subset R_1$ and $W_0\nsubseteq R_1$, we have
$e>0$. As in step (d2) we reduce to a case with a smaller $e$, say $e'$, for which we run all the proof from step (b) on. In case (d1) we see that we get a smaller $m$ for $e'<e$. Hence after finitely many steps we run only with cases with  $h^1(\mathcal {I}_{W_0\setminus W_0\cap R_1}(d-1)) >0$.

Now assume $h^1(\mathcal {I}_{W_0\setminus W_0\cap R_1}(d-1)) >0$. See step (d7) for the case $\sharp (W_0\setminus W_0\cap R_1))\ge 2d$.
Here we assume $\sharp (W_0\setminus W_0\cap R_1)) \le 2d-1$. Hence there is a line $T_2$ such that $\sharp (T_2\cap (W_0\setminus W_0\cap R_1)) \ge d+1$
(\cite{bgi}, Lemma 34). Let $R_1\subset \mathbb {P}^m(\mathbb {C})$ be a hyperplane such that $R_1\supset T_1\cup T_2$ and $W_0\cap R_2$ spans $R_2$. Notice that $\sharp (R_2\cap W) \ge m+2d-2$. First
assume $h^1(\mathcal {I}_{W_0\setminus W_0\cap R_2}(d-1))=0$. We conclude as at the beginning of step (d2). Now assume $h^1(\mathcal {I}_{W_0\setminus W_0\cap R_2}(d-1)) >0$.
Since $4d-m-1 > \sharp (W_0)$, we have $\sharp (W_0\setminus W_0\cap R_2) \le 2d-1$. Hence $\sharp (W_0\setminus W_0\cap R_2)) \ge d+1$ (it implies $a=3$)
and there is a line $T_3$ such that $\sharp (T_3\cap (W_0\setminus W_0\cap R_2) )\ge d+1$. First assume that either $m\ge 6$ or $\dim (\langle T_1\cup T_2\cup T_3\rangle )\le 4$.
In these cases there is a hyperplane $R_3$ containing $T_1\cup T_2\cup T_3$ and spanned
by some of the points of $W_0$. Since $T_1\cup T_2\subset R_2$ and  $\sharp (T_3\cap (W_0\setminus W_0\cap R_2) )\ge d+1$, we have $\sharp (W_0\cap R_3) \ge 3d+3 +(m-5)$.
Hence $\sharp (W_0)\ge 3d+m-2$. Hence either $m=5$, $a=3$ and $e=0$ or $m=6$, $a=3$ and $e=1$.

\quad (d5) Now assume $m=5$, $a=3$, $e=0$ and $\dim (\langle T_1\cup T_2\cup T_3\rangle )=5$. Hence $T_i\ne T_j$ for all $i\ne j$. Since $\sharp (A\cap T_j) \le 2$,
we have $\sharp (B\cap T_j) \ge 2$. Hence each line $T_j$ is defined over $\mathbb {R}$. Hence either $T_j\cap A =\emptyset$ or $T_j =\langle \{Q_h,\sigma (Q_h)\}\rangle$
for some $h\in \{1,2,3\}$. All triples of complex lines in $\mathbb {P}^5(\mathbb {C})$ whose union spans $\mathbb {P}^5(\mathbb {C})$ are projectively equivalent.
Hence we immediately see that
the sheaf $\mathcal {I}_{T_1\cup T_2\cup T_3}(2)$ is spanned by its global sections.
Since $W_0$ is finite, we get the existence of a complex quadric hypersurface $J$ such that $T_1\cup T_2\cup T_3\subset J$ and $J\cap W_0=
(T_1\cup T_2\cup T_3)\cap W_0$. Since $\deg (T_i\cap W_0)\ge d+1$ for all $i$ and $T_i\cap T_j =\emptyset$ for all $i\ne j$, we have
$\sharp (W_0\setminus W_0\cap J) \le 3$. Hence $h^1(\mathcal {I}_{W_0\setminus W_0\cap J}(d-2)) =0$. As in \cite{bb3}, Lemma 5, we get
$A\setminus A\cap J = B\setminus B\cap J$. Since $A\cap B=\emptyset$, we get $A\cup B \subset J$. Since
$J\cap W_0 = (T_1\cup T_2\cup T_3)\cap W_0$. Hence $A\cup B\subset T_1\cup T_2\cup T_3$. Since $A\subset T_1\cup T_2\cup T_3$ we get that
(up to renaming the points $Q_1,Q_2,Q_3$) $T_i = \langle \{Q_i,\sigma (Q_i)\}\rangle$. To conclude the lemma in this case
we only need to prove that $\sharp (B\cap T_i) =d$ for all $i$. Up to now we only know that $\sharp (B\cap T_i) \ge d-1$. Since $\sharp (B) \le 3d$, it is
sufficient to prove that $\sharp (B\cap T_i) \ge d$ for all $i$. Assume that this is not the case and that, up to a permutation of the indices $1,2,3$, there
is $h\in \{1,2,3\}$ such that $\sharp (T_i\cap B) =d-1$ if and only if $i\ge h$. Since $A\subset T_1\cup T_2\cup T_3$,
there are $O_i\in \langle \nu _d(T_i)(\mathbb {C})\rangle$ such that $P\in \langle \{O_1,O_2,O_3\}\rangle$.
Since $P\notin \langle \nu _d(A')\rangle$ for any $A' \subsetneq A$, we get $P\notin \langle E\rangle$ for any $E\subsetneq A_i$ and that the points $O_i$ are
unique. The uniqueness of $O_i$ implies $O_i\in  \langle \nu _d(T_i)(\mathbb {R})\rangle$. Fix $i<h$ (if any). Take the union of $B\cap T_j$ for all $j\ne i$
and a set computing the real rank of $O_i$ with respect to the rational normal curve $\nu _d(T_i)$. Since $\sharp (B) = r_{\mathbb {R}}(P)$, we get
$\sharp (B\cap T_i) = d$ for all $i<h$ (if any). There is a hyperplane $M\subset \mathbb {P}^m(\mathbb {C})$ such that $M$ contains $T_i$ for
all $i<h$ and exactly one point of $T_j\cap B$ for all $j\ge 4$. Hence $W_0\setminus W_0\cap M$ is the union of $d$ points of $T_j$ for all $j\ge h$. Lemma \ref{v1}
implies $h^1(\mathcal {I}_{W_0\setminus W_0\cap M}(d-1)) =0$. Hence $A\setminus A\cap M = B\setminus B\cap M$ (\cite{bb3}, Lemma 5). Hence $B\subset M$, a contradiction.

\quad (d6) Now assume $m= 6$, $a=3$ and $e=1$. This case is done as in step (d5), because $d-2 \ge 11$
and $\sharp (W_0\setminus W_0\cap (T_1\cup T_2\cup T_3)) \le 4$. We stated Lemma \ref{v1} in the case $e>0$ to allow its quotation here.

\quad (d7) In this step we conclude the proof of steps (d3) and (d4). We assume the existence of a hyperplane
$H\subset \mathbb {P}^m(\mathbb {C})$ such that $\sharp (W_0) -\sharp (W_0\cap H) \ge 2d$, $h^1(\mathcal {I}_{W_0\setminus W_0\cap H}(d-1)) >0$
and $W_0\cap H$ spans $H$. Hence $a=3$ and it is sufficient to check the cases $m=5$, $e=0$ and $m=6$, $e=1$. By Lemma \ref{vv0}
there are $i\in \{1,2,3\}$ and an $i$-dimensional linear subspace $J_i\subset \mathbb {P}^m(\mathbb {C})$ such that $\sharp (T_1\cap (W_0\setminus W_0\cap H)) \ge i(d-1)+2$.
If $i=1$, then we continue as in step (d4), just using the line $T_1$. In all other cases take a hyperplane $R$
containing $J_i$ and with maximal $\alpha :=\sharp (R\cap W_0)$. We have $\alpha \ge i(d-1)+2+m-i$. In step (d2) we proved
that a contradiction arises unless $h^1(\mathcal {I}_{W_0\setminus W_0\cap R}(d-1)) >0$. Hence $\sharp (W_0)-\alpha \ge d+1$. If $i=3$, then we get
$\sharp (W_0) \ge d+1 + 3d-1+m-4 $, a contradiction. Now assume $i=2$.  Since $2d+\alpha > \sharp (W_0)$, we have $\sharp (W_0\setminus W_0)\le 2d-1$.
Since $h^1(\mathcal {I}_{W_0\setminus W_0\cap R}) >0$, there is a line $T_4$ such that $\sharp (T\cap (W_0\setminus W_0\cap R)\ge d+1$.
Since $m \ge 5$ and $R\supset J_2$, there is a hyperplane containing $J_2$ and $T_4$. The maximality property of $\alpha$ gives $\alpha \ge 3d+1$.
Hence $\sharp (W_0)\le (3d+1)+(d+1)$, a contradiction.\end{proof}

\begin{theorem}\label{w2}
Fix integers $m\ge 1$ and $d\ge 3$ and any $P\in \sigma _2(X_{m,d}(\mathbb {C}))(\mathbb {R})\setminus X_{m,d}(\mathbb {R})$. Then either $r_{\mathbb {R}}(P)=2$ or
$r_{\mathbb {R}}(P)=d$ and both $2$ and $d$ are typical real rank for $\sigma _2(X_{m,d})$. We have $r_{\mathbb {R}}(P)=d$ if and only
if either $P$ is a point of the tangential variety $\tau (X_{m,d}(\mathbb {C}))(\mathbb {R})$ or $r_{\mathbb {C}}(P)=2$
and the only $A\subset \mathbb {P}^m(\mathbb {C})$ evincing $r_{\mathbb {C}}(P)$ is of the form $\{Q,\sigma (Q)\}$ for some $Q\in \mathbb {P}^m(\mathbb {C})\setminus \mathbb {P}^m(\mathbb {R})$. $2$ and $d$ are the typical ranks of $\sigma _2(X_{m,d}(\mathbb {C}))(\mathbb {R})$.
\end{theorem}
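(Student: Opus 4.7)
The plan is to use Remark \ref{w1} (applicable since $d\geq 3=2\cdot 2-1$) to attach to $P\in \sigma_2(X_{m,d}(\mathbb{C}))(\mathbb{R})\setminus X_{m,d}(\mathbb{R})$ the unique length-$2$ scheme $Z\subset \mathbb{P}^m(\mathbb{C})$ with $P\in \langle \nu_d(Z)\rangle$. Since $\sigma(P)=P$, uniqueness forces $\sigma(Z)=Z$, leaving three mutually exclusive cases: \emph{(a)} $Z=\{L_1,L_2\}$ with $L_1,L_2\in \mathbb{P}^m(\mathbb{R})$ distinct; \emph{(b)} $Z=\{Q,\sigma(Q)\}$ with $Q\in \mathbb{P}^m(\mathbb{C})\setminus \mathbb{P}^m(\mathbb{R})$; \emph{(c)} $Z$ is a length-$2$ jet at a real point in a real tangent direction (non-real non-reduced supports are forbidden since $\sigma$ must fix the support of $Z$), i.e.\ $P\in \tau(X_{m,d}(\mathbb{C}))(\mathbb{R})\setminus X_{m,d}(\mathbb{R})$.

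In case \emph{(a)}, $P$ lies in the real span of two real points of $X_{m,d}(\mathbb{R})$, so $r_{\mathbb{R}}(P)\leq 2$; since $P\notin X_{m,d}(\mathbb{R})$, we get $r_{\mathbb{R}}(P)=2$. In case \emph{(b)} with $d\geq 4$, Lemma \ref{ww1} applied with $a=1$ and $e=0$ yields $r_{\mathbb{R}}(P)=d$ directly. For the edge value $d=3$, uniqueness of $Z$ forbids any real rank-$2$ decomposition (which would force $Z$ to consist of two real points), so $r_{\mathbb{R}}(P)\geq 3$; on the other hand the chord $D=\langle Q,\sigma(Q)\rangle$ is defined over $\mathbb{R}$ and $r_{\mathbb{R}}(P)$ is bounded above by the real rank computed via $\nu_3(D(\mathbb{R}))$, which is at most the maximum real rank of a real binary cubic, namely $3=d$.

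In case \emph{(c)}, choosing a real linear form $L$ representing the support of $Z$ and a real tangent direction $M$, we have $P=[\alpha L^d+\beta L^{d-1}M]$ with $\beta\neq 0$; factoring out $L^{d-1}$ gives $P=[L^{d-1}N]$ with $N=\alpha L+\beta M$ a real linear form independent of $L$. The binary form $x^{d-1}y$ has complex rank $d$ (classical Sylvester), so $r_{\mathbb{R}}(P)\geq r_{\mathbb{C}}(P)=d$; and a standard Vandermonde argument exhibits $x^{d-1}y$ as a sum of $d$ real pure $d$-th powers, giving $r_{\mathbb{R}}(P)=d$.

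To conclude, I verify the typical-rank statement by counting real dimensions. Stratum \emph{(a)} is a non-empty open subset of $\sigma_2(X_{m,d}(\mathbb{C}))(\mathbb{R})$ of real dimension $2m+1=\alpha(m,d,2)$ (two unordered real points give $2m$ and the ratio of the two real coefficients contributes $1$); stratum \emph{(b)} is equally open of real dimension $2m+1$ (the non-real $Q$ contributes $2m$ and the real chord $\langle \nu_d(Q),\nu_d(\sigma(Q))\rangle(\mathbb{R})$ contributes $1$); stratum \emph{(c)} lies in the tangential variety and has real dimension $2m<2m+1$, so supports no typical rank. Hence $\{2,d\}$ is exactly the set of typical real ranks, and $r_{\mathbb{R}}(P)=d$ iff case \emph{(b)} or \emph{(c)} occurs. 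The main obstacle I anticipate is the clean invocation of Lemma \ref{ww1} in case \emph{(b)} together with the small-$d$ patch, and checking in case \emph{(c)} that a minimal real decomposition of $L^{d-1}N$ in $m+1$ variables cannot beat the binary bound of $d$, i.e.\ that leaving the line $\langle[L],[N]\rangle$ cannot reduce the real rank.
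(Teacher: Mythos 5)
Your argument is correct and follows the paper's skeleton: the uniqueness of the degree~$2$ scheme $Z$ from Remark \ref{w1}, the trichotomy (two real points, a conjugate pair, a real $2$-jet), and a dimension count showing that only the first two strata are full-dimensional (the paper gets this last point by quoting Remark \ref{w1} rather than recounting, but the content is the same). Where you genuinely diverge is the conjugate-pair case, which is the heart of the theorem. The paper settles it with a short self-contained Claim: for any $B\subset \mathbb {P}^m(\mathbb {R})$ evincing $r_{\mathbb {R}}(P)$ one has $h^1(\mathcal {I}_{\{Q,\sigma (Q)\}\cup B}(d))>0$ by \cite{bb1}, Lemma 1, and since $\sharp (\{Q,\sigma (Q)\}\cup B)\le d+2$, \cite{bgi}, Lemma 34, forces these to be $d+2$ collinear points, whence $\sharp (B)=d$ and $B\subset T$; this works uniformly for all $d\ge 3$ and $m\ge 1$. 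You instead invoke Lemma \ref{ww1} with $(a,e)=(1,0)$, which is legitimate (its proof precedes Theorem \ref{w2} and does not use it) but is a far heavier tool, requires $d\ge 4$, and so forces your separate $d=3$ patch (which is fine as written). One caution: the displayed conclusion of Lemma \ref{ww1} reads ``$r_{\mathbb {R}}(P)=2a+e$'', evidently a misprint, so you should not read it as giving $r_{\mathbb {R}}(P)=d$ ``directly''; the usable clause is that every real $B$ evincing $r_{\mathbb {R}}(P)$ contains $d$ points of $D_1$, giving $r_{\mathbb {R}}(P)\ge d$, and this must be paired with the upper bound $r_{\mathbb {R}}(P)\le d$ coming from the real line $\langle \{Q,\sigma (Q)\}\rangle$ (\cite{co}, Proposition 2.1), which you state only for $d=3$. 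In the tangential case your treatment matches the paper's lower bound ($r_{\mathbb {C}}(P)=d$ for $P=[L^{d-1}N]$, so your worry about decompositions leaving the line $\langle [L],[N]\rangle$ is moot), and your explicit real decomposition of $x^{d-1}y$ (choose a degree~$d$ apolar form with distinct real roots summing to zero) is an acceptable substitute for the paper's citation of \cite{co}, Proposition 2.1. Net comparison: the paper's Claim buys uniformity in $d$ and the extra structural information $B\subset T$, $\sharp (B)=d$; your route is correct but outsources the key step to a lemma proved for much more general border ranks.
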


\begin{proof}
By Remark \ref{w1} there is a unique scheme $Z\subset \mathbb {P}^m(\mathbb {C})$ such that $\deg (Z)=2$, $P\in \langle \nu _d(Z)\rangle$ and $Z$ is defined over $\mathbb {R}$. First assume that $Z$ is not reduced and set $\{O\}:= Z_{red}$. Since $\sigma (Z)=Z$, we
have $O\in \mathbb {P}^m(\mathbb {R})$ and $D:= \langle Z\rangle \subseteq \mathbb {P}^m(\mathbb {C})$ is a line
defined over $\mathbb {R}$. In this case we have $r_{\mathbb {C}}(P)=d$ (\cite{bgi}, Theorem 32). Hence $r_{\mathbb {R}}(P)\ge d$. Since $P\in \langle \nu _d(D)\rangle$, $r_{\mathbb {R}}(P)$
is at most the real rank of $P$ with respect to the rational normal curve $\nu _d(D)$, we have $r_{\mathbb {R}}(P)\le d$. Hence $r_{\mathbb {R}}(P)= d$. Now
assume that $Z$ is reduced. If $Z =\{P_1,P_2\} \subset \mathbb {P}^m(\mathbb {R})$ with $P_1\ne P_2$, then $r_{\mathbb {R}}(P)= 2$. Now
assume $Z = \{Q,\sigma (Q)\}$ for some $Q\in  \mathbb {P}^m(\mathbb {C})\setminus \mathbb {P}^m(\mathbb {R})$. Set $T:= \langle \{Q,\sigma (Q)\}$. Since
the line $T$ is defined over $\mathbb {R}$ and $P\in \langle \nu _d(T)\rangle$, the bivariate case gives $r_{\mathbb {R}}(P) \le d$ (\cite{co}, Proposition 2.1). 

\quad {\emph {Claim :}} Let $B\subset \mathbb {P}^m(\mathbb {R})$ be any set evincing $r_{\mathbb {R}}(P)$. Then $\sharp (B) = d$ and $B\subset T(\mathbb {R})$.

\quad {\emph {Proof of the Claim:}} Since $r_{\mathbb {R}}(P)\le d$, we have $\sharp (B)\le d$.
Set $A:= \{Q,\sigma (Q)\}$. Since $A\ne B$, $P\in \langle \nu _d(A)\rangle \cap \langle \nu _d(B)\rangle$, $P\notin \langle \nu _d(A')\rangle$
for any $A'\subsetneq A$ and $P\notin \langle \nu _d(B')\rangle$
for any $B'\subsetneq B$, we have $h^1(\mathcal {I}_{A\cup B}(d)) >0$ (\cite{bb1}, Lemma 1). Since $\sharp (A\cup B)\le d+2$, \cite{bgi}, Lemma 34, gives
$\sharp (A\cup B)=d+2$ and the existence of a line $T'\subset \mathbb {P}^m(\mathbb {C})$ such that $A\cup B\subset T'$. Since $\sharp (A\cup B)=d+2$, we have
$\sharp (B) =d$. Since $A$ spans $T$, we have $T'=T$.

The last part of Remark \ref{w1} gives that $2$ and $d$ are the typical ranks of $\sigma _2(X_{m,d}(\mathbb {C}))(\mathbb {R})$.\end{proof}

\begin{theorem}\label{w3}
Fix integers $m \ge 2$ and $d\ge 6$. Then $3$ and $d+1$ are the typical ranks of $\sigma _3(X_{m,d}(\mathbb {C}))(\mathbb {R})$.
\end{theorem}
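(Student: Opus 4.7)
The plan is to combine Remark \ref{w1}, which parametrizes the open pieces of $\sigma _3(X_{m,d}(\mathbb {C}))(\mathbb {R})$ by the real structure of the evincing scheme, with Lemma \ref{ww1}, which computes the real rank on the non-trivial piece.

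Since $3(m+1) < {m+d \choose m}$ throughout the range $m\ge 2$, $d\ge 6$, Remark \ref{w0} immediately gives that $3$ is a typical rank. Because $d\ge 6 \ge 2\cdot 3 -1$, Remark \ref{w1} applies and provides, for every $P\in \sigma _3(X_{m,d}(\mathbb {C}))(\mathbb {R})\setminus \sigma _2(X_{m,d}(\mathbb {C}))(\mathbb {R})$, a unique degree $3$ evincing scheme $Z\subset \mathbb {P}^m(\mathbb {C})$ defined over $\mathbb {R}$. The Hilbert scheme of length $3$ subschemes of $\mathbb {P}^m(\mathbb {C})$ is irreducible of dimension $3m$ with its reduced locus open and dense; hence any stratum where $Z$ fails to be reduced has real dimension strictly smaller than $\alpha (m,d,3)=3m+2$ and contributes no typical rank.

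The typical ranks therefore arise on the full-dimensional pieces where $Z=J$ is reduced. For $\sharp (J)=3$, the only two real types in the sense of Remark \ref{w1} are $(3,0)$ (three distinct real points, $a=0$) and $(1,1)$ (one real point together with one conjugate pair, $a=1$). On the $(3,0)$-stratum the rank is trivially $3$: if $J=\{P_1,P_2,P_3\}\subset \mathbb {P}^m(\mathbb {R})$, any $P\in \langle \nu _d(J)\rangle (\mathbb {R})$ not contained in a proper subspan $\langle \nu _d(J')\rangle$ satisfies $r_{\mathbb {R}}(P)\le 3$, with equality because $P\notin \sigma _2(X_{m,d}(\mathbb {C}))$. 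On the $(1,1)$-stratum we apply Lemma \ref{ww1} with $a=1$ and $e=1$: the hypotheses $m\ge 2a+e-1=2$ and $d\ge 4a+2e=6$ exactly match the assumptions of the theorem, and the conclusion forces every real set $B$ evincing $r_{\mathbb {R}}(P)$ to contain $d$ points on the line $D_1=\langle \{Q_1,\sigma (Q_1)\}\rangle$ together with the real point $P_1$, giving $r_{\mathbb {R}}(P)=d+1$.

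The main obstacle is the real-rank computation on the $(1,1)$-stratum, which is precisely Lemma \ref{ww1}; the remainder of the argument is a straightforward dimension count separating reduced from non-reduced evincing schemes and a classification of the possible real structures on a reduced scheme of degree $3$.
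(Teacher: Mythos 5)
Your proposal is correct and is essentially the paper's own argument: the paper proves Theorems \ref{w3}--\ref{w7} simultaneously by noting $b$ is typical (Remark \ref{w0}) and applying Lemma \ref{ww1} with $a\ge 1$, $e=b-2a$ (for $b=3$ this is exactly your case $a=e=1$, giving the typical rank $ad+e=d+1$; note the displayed ``$r_{\mathbb {R}}(P)=2a+e$'' in Lemma \ref{ww1} is a typo for $ad+e$, as the proof and the use in Theorems \ref{w3}--\ref{w7} make clear). Your explicit dimension count discarding non-reduced evincing schemes and the classification of real types of a reduced degree-$3$ scheme are just the details the paper leaves implicit in Remark \ref{w1}.
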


\begin{theorem}\label{w4}
Fix integers $m\ge 3$ and $d\ge 8$. For all $m\ge 3$ the typical ranks of $\sigma _4(X_{m,d}(\mathbb {C}))(\mathbb {R})$ are $4$, $d+2$ and $2d$.
\end{theorem}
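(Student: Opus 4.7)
The plan is to enumerate the admissible $\sigma$-invariant configurations described by Remark \ref{w1} for $b=4$ and then invoke Lemma \ref{ww1} to compute the real rank on each full-dimensional stratum. Since $\alpha(m,d,4) = 4m+3$ and the real manifold associated with a type $(e-2a,a)$ has dimension $e(m+1)-1$, only strata with $e=4$ can carry a typical rank. In particular, the unique scheme $Z \subset \mathbb{P}^m(\mathbb{C})$ attached to a generic $P$ (Remark \ref{w1}) must be reduced, so $J = Z_{\mathrm{red}}$ consists of four distinct points. Splitting $J$ into real points and conjugate pairs via $2a + (e-2a) = 4$ leaves exactly the three types $(4,0)$, $(2,1)$, and $(0,2)$; non-reduced schemes $Z$ and points lying already in $\sigma_3(X_{m,d}(\mathbb{C}))(\mathbb{R})$ contribute only lower-dimensional strata and hence no new typical ranks.

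Having reduced to these three cases, I would compute the real rank in each. For type $(4,0)$ the set $A$ consists of four real points, so $r_{\mathbb{R}}(P) \le \sharp(A) = 4$; the uniqueness of $A$ as an evincing scheme together with $P \notin \sigma_3$ also gives $r_{\mathbb{C}}(P) = 4$, whence $r_{\mathbb{R}}(P) \ge r_{\mathbb{C}}(P) = 4$ and therefore $r_{\mathbb{R}}(P) = 4$ on an open subset of the corresponding $(4m+3)$-dimensional real manifold. For type $(2,1)$ I apply Lemma \ref{ww1} with $(a,e) = (1,2)$; the numerical hypotheses $m \ge 2a+e-1 = 3$ and $d \ge 4a+2e = 8$ coincide exactly with the hypotheses of the theorem, and its conclusion forces any real evincing set to contain $d$ points of $D_1$ together with $P_1, P_2$, yielding $r_{\mathbb{R}}(P) = d+2$ generically. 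For type $(0,2)$ I apply Lemma \ref{ww1} with $(a,e) = (2,0)$; the hypotheses again reduce to $m \ge 3$ and $d \ge 8$, and the lemma gives $r_{\mathbb{R}}(P) = 2d$.

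Since the three strata are pairwise disjoint and together exhaust the full-dimensional part of $\sigma_4(X_{m,d}(\mathbb{C}))(\mathbb{R})$, the typical ranks are exactly the three values $4$, $d+2$, and $2d$. The main difficulty has already been absorbed into Lemma \ref{ww1}, whose proof carries out the delicate combinatorial analysis forcing any real set evincing $r_{\mathbb{R}}(P)$ to contain $d$ points on each conjugate line $D_i$; the present theorem is essentially a bookkeeping corollary, the only remaining verification being that the three numerical regimes of Lemma \ref{ww1} are indeed triggered by the hypotheses $m \ge 3$, $d \ge 8$.
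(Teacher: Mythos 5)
Your proposal is correct and follows essentially the same route as the paper: the paper proves Theorems \ref{w3}--\ref{w7} simultaneously by noting that $b$ is trivially a typical rank and then invoking Lemma \ref{ww1} for each pair $(a,e)$ with $e=b-2a$, which for $b=4$ gives exactly the strata $(a,e)=(1,2)$ and $(2,0)$ with ranks $d+2$ and $2d$. Your extra bookkeeping (the dimension count via Remark \ref{w1}, discarding non-reduced $Z$, and the type $(4,0)$ case) just makes explicit what the paper leaves implicit.
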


\begin{theorem}\label{w5}
Fix integers $m\ge 4$ and $d \ge 10$. The typical ranks of 
$\sigma _5(X_{m,d}(\mathbb {C}))(\mathbb {R})$ are $5$, $d+3$ and $2d+1$.
\end{theorem}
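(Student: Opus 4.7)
My plan is to apply Remark \ref{w1} to stratify the top-dimensional part of $\sigma _5(X_{m,d}(\mathbb {C}))(\mathbb {R}) \setminus \sigma _4(X_{m,d}(\mathbb {C}))(\mathbb {R})$ by the type of the underlying $5$-point scheme, and then invoke Lemma \ref{ww1} on each reduced stratum. Since $d \ge 10 \ge 2\cdot 5 -1$, Remark \ref{w1} attaches to every such $P$ a unique zero-dimensional scheme $Z \subset \mathbb {P}^m(\mathbb {C})$ of degree $5$ defined over $\mathbb {R}$; when $Z$ is reduced, $J := Z_{red}$ consists of $5$ points of type $(5-2a, a)$ with $a \in \{0, 1, 2\}$, and when $Z$ is non-reduced the corresponding real stratum has dimension strictly less than $5(m+1)-1$ and therefore carries no typical rank. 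For each $a \in \{0, 1, 2\}$, varying $J$ of the prescribed reduced type and then $P \in \langle \nu _d(J)\rangle(\mathbb {R}) \setminus \bigcup _{J' \subsetneq J} \langle \nu _d(J')\rangle$ produces a full $(5(m+1)-1)$-dimensional open real manifold.

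\medskip

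For type $(5, 0)$ the set $A = J$ consists of $5$ real points, so $P$ is already a real combination of the $\nu _d(P_i)$, giving $r_{\mathbb {R}}(P) \le 5$; combined with $r_{\mathbb {R}}(P) \ge r_{\mathbb {C}}(P) = 5$, this forces $r_{\mathbb {R}}(P) = 5$. For types $(3, 1)$ and $(1, 2)$ I invoke Lemma \ref{ww1}. Taking $a = 1$, $e = 3$ for type $(3, 1)$, the numerical hypotheses $m \ge 2a+e-1 = 4$, $d \ge 4a+2e = 10$, and $0 \le e \le 7-2a = 5$ match exactly our assumptions, and the lemma yields $r_{\mathbb {R}}(P) = ad + e = d+3$. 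Taking $a = 2$, $e = 1$ for type $(1, 2)$, the hypotheses $m \ge 4$, $d \ge 10$, $0 \le e \le 3$ are again satisfied, while the spanning condition $\dim \langle Q_1, \sigma (Q_1), Q_2, \sigma (Q_2)\rangle = 2a-1 = 3$ holds generically because $m \ge 4 \ge 2a-1$, and the lemma gives $r_{\mathbb {R}}(P) = ad + e = 2d+1$.

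\medskip

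The three values $5$, $d+3$, $2d+1$ thus occur on three disjoint full-dimensional open real submanifolds which together fill a dense open subset of $\sigma _5(X_{m,d}(\mathbb {C}))(\mathbb {R}) \setminus \sigma _4(X_{m,d}(\mathbb {C}))(\mathbb {R})$, so no other value can appear as a typical rank. The main obstacle is entirely absorbed into Lemma \ref{ww1}: the difficult half of that lemma is the upper bound $\sharp (B) \le ad + e$ for any real set $B$ evincing $r_{\mathbb {R}}(P)$, together with the forcing of $d$ points on each of the real lines $D_i = \langle Q_i, \sigma (Q_i)\rangle$; once this is granted, Theorem \ref{w5} reduces to the purely combinatorial type-enumeration carried out above, which is why the uniform bounds $m \ge 4$ and $d \ge 10$ suffice.
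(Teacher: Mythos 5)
Your proposal is correct and follows essentially the same route as the paper: the paper's joint proof of Theorems \ref{w3}--\ref{w7} likewise combines Remark \ref{w0} (the rank $b=5$), the stratification by type $(b-2a,a)$ from Remark \ref{w1}, and Lemma \ref{ww1} applied with $(a,e)=(1,3)$ and $(2,1)$, after checking the same numerical hypotheses $m\ge 2a+e-1=4$ and $d\ge 4a+2e=10$. Your reading of Lemma \ref{ww1} as giving $r_{\mathbb{R}}(P)=ad+e$ is the intended one (the ``$2a+e$'' in its statement is the complex rank, as the forced structure of $B$ shows), so no gap remains.
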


\begin{theorem}\label{w6}
Fix integers $m \ge 5$ and $d\ge 12$. The typical ranks of 
$\sigma _6(X_{m,d}(\mathbb {C}))(\mathbb {R})$ are $6$, $d+4$, $2d+2$ and $3d$.
\end{theorem}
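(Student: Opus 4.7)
The plan is to mimic the strategy of Theorems \ref{w3}, \ref{w4}, and \ref{w5}, combining Remark \ref{w1} with Lemma \ref{ww1} applied to each possible type $(6-2a,a)$ of a reduced length-$6$ real scheme. By Remark \ref{w1} (applicable since $d\ge 12 \ge 2\cdot 6-1$), every $P\in \sigma_6(X_{m,d}(\mathbb{C}))(\mathbb{R}) \setminus \sigma_5(X_{m,d}(\mathbb{C}))(\mathbb{R})$ is associated to a unique length-$6$ scheme $Z\subset \mathbb{P}^m(\mathbb{C})$ defined over $\mathbb{R}$ with $P\in \langle \nu_d(Z)\rangle$. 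Since $\alpha(m,d,6)=6(m+1)-1=6m+5$, the typical ranks are exactly the constant values of $r_{\mathbb{R}}$ on the $(6m+5)$-dimensional open strata. First I would observe that only configurations with $Z$ reduced can contribute typical ranks: non-reduced length-$6$ schemes form a locus of codimension $\ge 1$ in $\mathrm{Hilb}^6(\mathbb{P}^m)$, so the corresponding pieces of $\sigma_6(X_{m,d}(\mathbb{C}))(\mathbb{R})$ (adding the $5$-dimensional span) have real dimension $\le 6m+4$.

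Following Remark \ref{w1}, the reduced real length-$6$ schemes split into four types $(6-2a,a)$ with $a\in\{0,1,2,3\}$, each yielding a $(6m+5)$-dimensional real stratum, hence at most four candidate typical ranks. The case $a=0$ is direct: $Z\subset \mathbb{P}^m(\mathbb{R})$ gives $r_{\mathbb{R}}(P)\le \sharp (Z)=6$, while $r_{\mathbb{R}}(P)\ge r_{\mathbb{C}}(P)=6$ since $Z$ is the unique scheme evincing the complex rank and it is reduced. For $a\in\{1,2,3\}$ set $e:=6-2a$; then $m\ge 5\ge 2a+e-1$ and $d\ge 12\ge 4a+2e$, so the hypotheses of Lemma \ref{ww1} are in force on the dense open subset of the stratum where $\dim\langle Q_1,\sigma(Q_1),\ldots,Q_a,\sigma(Q_a)\rangle =2a-1$. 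Lemma \ref{ww1} then yields $r_{\mathbb{R}}(P)=ad+e$, giving $d+4$ for $a=1$, $2d+2$ for $a=2$, and $3d$ for $a=3$.

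The most delicate step, I expect, will be confirming that each of the four strata contains a non-empty euclidean-open subset on which $r_{\mathbb{R}}$ is constant, not merely a Zariski-open one. This should follow because each stratum is an irreducible real manifold of the expected dimension $6m+5$, the complement of the genericity locus for Lemma \ref{ww1} is a proper real-algebraic subset, and the fiber structure $P\mapsto Z$ from Remark \ref{w1} is real-analytic on the open part of border rank exactly $6$. Together with the dimension comparison for non-reduced $Z$ in the first paragraph, this produces exactly the four typical ranks $6,\ d+4,\ 2d+2,\ 3d$ and rules out any further value, completing the proof.
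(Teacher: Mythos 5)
Your proposal is correct and is essentially the paper's own argument: the paper proves Theorems \ref{w3}--\ref{w7} simultaneously by combining the type stratification of Remark \ref{w1} (with the non-reduced and lower-dimensional loci discarded) with Lemma \ref{ww1} applied to each $a\in\{1,\dots ,\lfloor b/2\rfloor\}$, $e:=b-2a$, which together with the obvious value $b$ gives exactly $6$, $d+4$, $2d+2$, $3d$ for $b=6$. Your reading of Lemma \ref{ww1} as giving $r_{\mathbb{R}}(P)=ad+e$ is the intended one (the stated $2a+e$ there is a typo for the complex rank), and your explicit dimension count excluding non-reduced schemes merely spells out what the paper leaves implicit in Remark \ref{w1}.
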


\begin{theorem}\label{w7}
Fix integer $m \ge 6$ and $d\ge 14$. The typical ranks of 
$\sigma _7(X_{m,d}(\mathbb {C}))(\mathbb {R})$ are $7$, $d+5$, $2d+3$ and $3d+1$.
\end{theorem}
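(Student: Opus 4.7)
The plan is to combine Remark \ref{w0}, Remark \ref{w1} and Lemma \ref{ww1} to classify the typical ranks of $\sigma_7(X_{m,d}(\mathbb{C}))(\mathbb{R})$. Since $d \ge 14 > 2\cdot 7 - 1$, Remark \ref{w1} attaches to each $P \in \sigma_7(X_{m,d}(\mathbb{C}))(\mathbb{R}) \setminus \sigma_6(X_{m,d}(\mathbb{C}))(\mathbb{R})$ a unique length-$7$ scheme $Z$ defined over $\mathbb{R}$. When $Z = J$ is reduced its type is $(7-2a,a)$ for some $a \in \{0,1,2,3\}$, and the corresponding real manifold of points $P \in \langle \nu_d(J)\rangle(\mathbb{R})$ has the full real dimension $7(m+1)-1 = 7m+6$. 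Thus each of these four strata is a candidate for supporting a typical rank, and the task reduces to computing the real rank on each one.

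For $a = 0$ Remark \ref{w0} already delivers the typical rank $7$. For $a \in \{1,2,3\}$ I would apply Lemma \ref{ww1} with $e := 7 - 2a \in \{5,3,1\}$; the hypotheses $m \ge 2a + e - 1 = 6$ and $d \ge 4a + 2e = 14$ match exactly the assumptions of the theorem. The lemma then forces every $B \subset \mathbb{P}^m(\mathbb{R})$ evincing $r_{\mathbb{R}}(P)$ to contain $d$ points on each real line $D_i = \langle \{Q_i,\sigma(Q_i)\}\rangle$ together with the real points $P_1,\ldots,P_e$, so that $r_{\mathbb{R}}(P) = ad + e$. Substituting $a = 1,2,3$ produces the three values $d+5$, $2d+3$ and $3d+1$, and since $d \ge 14$ the four ranks $7,\,d+5,\,2d+3,\,3d+1$ are pairwise distinct.

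To conclude I would verify that non-reduced length-$7$ schemes contribute no additional typical rank. The $\sigma$-fixed non-reduced locus inside the length-$7$ Hilbert scheme of $\mathbb{P}^m(\mathbb{C})$ has dimension at most $7m-1$, because replacing any two simple points by a double point (or, analogously, a pair of simple conjugate points by a conjugate pair of tangent vectors) costs at least one parameter; the associated locus of $P \in \langle \nu_d(Z)\rangle(\mathbb{R})$ then has real dimension at most $7m+5 < 7m+6$ and so cannot carry a typical rank. Combined with the previous paragraph this shows that the four values listed are exactly the typical ranks.

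The genuine difficulty is already packaged into Lemma \ref{ww1}, which performs the intricate Castelnuovo-type case analysis identifying the sets that can evince $r_{\mathbb{R}}(P)$ on each stratum. The only non-routine piece of work specific to the present theorem is the exhaustiveness step above; the remainder is a bookkeeping exercise that checks the hypotheses of Lemma \ref{ww1} in each of the three non-trivial types and assembles the four typical ranks.
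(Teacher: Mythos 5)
Your proposal is correct and takes essentially the same route as the paper: its joint proof of Theorems \ref{w3}--\ref{w7} likewise combines Remark \ref{w0} (giving the typical rank $7$) with Lemma \ref{ww1} applied to each type $(7-2a,a)$, $a\in\{1,2,3\}$, $e=7-2a$ (noting $2a+e-1=6$ and $4a+2e=14$, so the hypotheses match $m\ge 6$, $d\ge 14$), which yields the typical ranks $ad+e=d+5,\,2d+3,\,3d+1$. Your explicit dimension count ruling out non-reduced length-$7$ schemes just makes precise an exhaustiveness point the paper leaves implicit in Remark \ref{w1} and in the definition of typical rank.
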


\vspace{0.3cm}

\qquad {\emph {Proofs of Theorems \ref{w3}, \ref{w4}, \ref{w5}, \ref{w6} and \ref{w7}.}} Fix
the border rank $b\in \{3,4,5,6,7\}$. Obviously $b$ is a typical rank for the border rank. Notice that in the statement of the theorem
concerning the border rank $b$ we assumed $m\ge b-1$. For any integer $a$ such that $1 \le a \le b/2$ we have $d \ge 4a +2(b-2a)$. Hence
the assumptions of Lemma \ref{ww1} are satisfied for the data $m,d,a,e:= b-2a$. Lemma \ref{ww1} says
that all typical ranks for the border rank $b$ are obtained taking an integer $a\in \{1,\dots ,\lfloor b/2\rfloor\}$, setting $e:= b-2a$ and then
describing a subset of $A\in \mathbb {P}^m(\mathbb {C})^b$ with $\sigma (A)=A$ and associated to the typical rank $ad +e$.\qed

\begin{theorem}\label{w8}
Fix integers $m\ge 2$, $b\ge 2$ and $d\ge 2b-1$. Then $b$ and $b+d-2$ are the first two typical ranks of $\sigma _b(X_{m,d}(\mathbb {C}))(\mathbb {R})$.
\end{theorem}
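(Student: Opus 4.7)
The plan is to exhibit two nonempty open subsets of $\sigma_b(X_{m,d}(\mathbb{C}))(\mathbb{R})$, each of full real dimension $b(m+1)-1$, on which $r_{\mathbb{R}}$ is constantly $b$ and $b+d-2$ respectively, and then show no typical rank lies strictly between. The rank $b$ is already typical by Remark \ref{w0} (take $b$ distinct real points in general position with generic real coefficients). For the rank $b+d-2$, I use the type $(b-2,1)$ stratum of Remark \ref{w1}: $A=\{Q,\sigma(Q),P_1,\ldots,P_{b-2}\}$ with $Q\in\mathbb{P}^m(\mathbb{C})\setminus\mathbb{P}^m(\mathbb{R})$ and $P_1,\ldots,P_{b-2}\in\mathbb{P}^m(\mathbb{R})$ in general position. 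Since $d\ge 2b-1\ge b-1$, $\nu_d(A)$ is linearly independent, the line $D=\langle Q,\sigma(Q)\rangle$ is defined over $\mathbb{R}$, and one has the real direct sum $\langle\nu_d(A)\rangle=\langle\nu_d(\{Q,\sigma(Q)\})\rangle\oplus\langle\nu_d(\{P_1,\ldots,P_{b-2}\})\rangle$.

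For the upper bound, decompose a generic $P\in\langle\nu_d(A)\rangle(\mathbb{R})$ as $P=O+R$ via this direct sum. Since $O$ lies on the real chord of the rational normal curve $\nu_d(D)$ through the conjugate pair $\nu_d(Q),\nu_d(\sigma(Q))$, Theorem \ref{w2} gives $r_{\mathbb{R}}(O)\le d$; combined with $r_{\mathbb{R}}(R)\le b-2$, this yields $r_{\mathbb{R}}(P)\le d+(b-2)=b+d-2$.

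The matching lower bound is the main obstacle. Let $B\subset\mathbb{P}^m(\mathbb{R})$ evince $r_{\mathbb{R}}(P)$ and suppose $|B|\le b+d-3$. Since $A$ is the unique scheme evincing $r_{\mathbb{C}}(P)$ (Remark \ref{w1}) and $B\ne A$, \cite{bb1} Lemma 1 gives $h^1(\mathcal{I}_{A\cup B}(d))>0$. The bound $|A\cup B|\le 2b+d-3\le 2d+1$ (using $d\ge 2b-1$) lets one apply \cite{bgi} Lemma 34 to find a line $L$ with $|L\cap(A\cup B)|\ge d+2$; generic position of $A$ forces $|L\cap A|\le 2$, so $|L\cap B|\ge d$. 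The hardest step is to prove $L=D$: since $|L\cap B|\ge 2$ and $B$ is real, $L$ must be defined over $\mathbb{R}$, and if $L\ne D$ then $L\cap\{Q,\sigma(Q)\}=\emptyset$, whereupon one iterates the Castelnuovo exact sequence through a hyperplane containing $L$ (as in steps (c) and (d) of the proof of Lemma \ref{ww1}) until the cardinality estimate $|A\cup B|\le 2b+d-3$ is violated. With $L=D$ in hand, $|B\cap D|\ge d$, and the vanishing $\langle\nu_d(D)\rangle\cap\langle\nu_d(\{P_i\}\cup(B\setminus D))\rangle=0$ (which holds generically, because $h^1(\mathcal{I}_{D\cup\{P_i\}\cup(B\setminus D)}(d))=0$ when the set off $D$ is small relative to $d$) identifies the two splittings $P=O+R$ and $P=\bigl(\sum_{b_j\in B\cap D}\mu_j\nu_d(b_j)\bigr)+\bigl(\sum_{b_j\in B\setminus D}\mu_j\nu_d(b_j)\bigr)$. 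Hence $B\cap D$ is a real decomposition of $O$ on the rational normal curve $\nu_d(D)$, so $|B\cap D|\ge r_{\mathbb{R}}(O)=d$, and $B\setminus D$ is a real decomposition of $R$, so $|B\setminus D|\ge r_{\mathbb{C}}(R)=b-2$, yielding $|B|\ge b+d-2$, a contradiction.

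Finally, no typical rank lies in $(b,b+d-2)$. By Remark \ref{w1} the only full-dimensional real strata of $\sigma_b(X_{m,d}(\mathbb{C}))(\mathbb{R})$ are the type-$(b-2a,a)$ strata for $0\le a\le\lfloor b/2\rfloor$. An induction on $a$, using the same $h^1$-plus-direct-sum argument to peel off one chord $\langle\nu_d(D_i)\rangle$ at a time and recurse on the residual (which sits in a type-$(b-2a,a-1)$ configuration on $b-2$ points), yields $r_{\mathbb{R}}(P)\ge ad+(b-2a)=b+a(d-2)$. For $a\ge 2$ this exceeds $b+d-2$ since $d\ge 2b-1\ge 3$, so no typical rank lies strictly between $b$ and $b+d-2$, completing the proof.
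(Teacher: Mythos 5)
Your construction of the type-$(b-2,1)$ stratum and the upper bound $r_{\mathbb {R}}(P)\le b+d-2$ match the paper, but two steps of your lower-bound/exclusion argument have genuine gaps. First, your mechanism for forcing $L=D$ does not work as described: if $L\ne D$ you propose to iterate Castelnuovo exact sequences through hyperplanes containing $L$ ``until the cardinality estimate is violated'', but in the branch where the positive $h^1$ stays on the hyperplane (the typical branch, since the $\ge d+2$ collinear points of $A\cup B$ on $L$ already force $h^1>0$ there) no cardinality contradiction ever appears; pure counting cannot distinguish $L\ne D$ from the genuine case $L=D$. The contradiction has to come from reality, not cardinality: take a hyperplane $H\supset L$ meeting $A\cup B$ only along $L$; the residual $(A\cup B)\setminus H$ has at most $2b-5\le d-1$ points, so $h^1(\mathcal {I}_{(A\cup B)\setminus ((A\cup B)\cap H)}(d-1))=0$, and then \cite{bb3}, Lemma 5, gives $A\setminus A\cap H=B\setminus B\cap H$; since $B\subset \mathbb {P}^m(\mathbb {R})$ while $Q\in A\setminus L$ when $L\ne D$, this is absurd. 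This is exactly the step the paper uses (and it simultaneously replaces your direct-sum identification at the end).

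Second, and more seriously, your exclusion of typical ranks strictly between $b$ and $b+d-2$ rests on the claim that an induction on $a$, ``peeling off one chord at a time'', yields $r_{\mathbb {R}}(P)\ge b+a(d-2)$ on every type-$(b-2a,a)$ stratum assuming only $m\ge 2$ and $d\ge 2b-1$. That claim is essentially the content of Lemma \ref{ww1}, which the paper proves only for $a\le 3$ (hence $b\le 7$) and only under the much stronger hypotheses $m\ge 2a+e-1$ and $d\ge 4a+2e$, by a long case analysis: once the hypothetical rank bound exceeds roughly $2d$, the set $A\cup B$ is too large for \cite{bgi}, Lemma 34, and one needs \cite{b0}-type results and delicate configurations of several lines; moreover for $m=2$ the chords $D_i$ all meet pairwise, so the peeling via hyperplanes does not even start (compare Remark \ref{ww2}, which warns that going beyond $b+d-2$ is genuinely harder and depends on $m$). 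The point you are missing is that no such estimate on the $a\ge 2$ strata is needed: the paper takes a general $P$ of \emph{any} type with $b<g:=r_{\mathbb {R}}(P)\le b+d-2$, notes $\sharp (A\cup B)\le 2b+d-2\le 2d+1$ (here the hypothesis $g\le b+d-2$ is what keeps the cardinality small), finds the single line $T$ with $\ge d+2$ points, and applies the same hyperplane-plus-Lemma-5 step to get $A\setminus A\cap T=B\setminus B\cap T$; counting and the reality of $B$ then force $g=b+d-2$, $\sharp (A\cap T)=2$ with $A\cap T$ a conjugate pair, i.e.\ type $(b-2,1)$. This settles both that $b+d-2$ is attained on the $a=1$ stratum and that no typical rank lies in the open interval, without any control of the higher strata.
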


\begin{proof}
Obviously $b$ is the minimal typical rank. Take an integer $g$ such that $b<g \le b+d-2$ and take a sufficiently general
$P$ in an $(mb+b-1)$-dimensional open subset of $\sigma _b(X_{m,d}(\mathbb {C}))(\mathbb {R})\setminus \sigma _{b-1}(X_{m,d}(\mathbb {C}))(\mathbb {R})$ with real symmetric rank $g$. Since
$d\ge 2b-1$, there is a unique zero-dimensional scheme $A\subset \mathbb {P}^m(\mathbb {C})$ such that $\deg (A) =b$ and $P\in \langle \nu _d(A)\rangle$ (Remark \ref{w1}).
Moreover $P\notin \langle \nu _d(A')\rangle$ for any $A'\subsetneq A$. For a general $P$ we may assume that $A$ is reduced and that its Hilbert function
is the Hilbert function of a general set of $b$ points of $\mathbb {P}^m(\mathbb {C})$, i.e. $h^1(\mathcal {I}_A(t)) = \max \{0,b-{m+t \choose m}\}$ for all $t\in \mathbb {N}$.
In particular no $3$ of the points of $A$ are collinear.
The uniqueness of $A$ implies $\sigma (A)=A$. 
Take $B\subset \mathbb {P}^m(\mathbb {R})$ evincing $r_{\mathbb {R}}(P)$. We have $h^1(\mathcal {I}_{A\cup B}(d)) >0$ (\cite{bb1}, Lemma 1). Since $\sharp (A\cup B) \le 2b+d-2 \le 2d+1$,
there is a line $T\subset \mathbb {P}^m(\mathbb {C})$ such that $\sharp (A\cup B) \ge d+2$. Since $\sharp (A\cap T) \le 2$, we have $\sharp (B\cap T)\ge d$.
Since $\sharp (B\cap T)\ge 2$, the line $T$ is defined over $\mathbb {R}$. Since $A\cup B$ is a finite set, there is a hyperplane $H\subset \mathbb {P}^m(\mathbb {C})$
such that $H\supseteq T$ and  $H\cap (A\cup B\setminus (A\cup B)\cap T) =\emptyset$. Since $\sharp (A\cup B)-\sharp (H\cap (A\cup B)) \le d-1 \le d$,
we have $h^1(\mathcal {I}_{A\cup B\setminus H\cap (A\cup B)}(d-1))=0$. Hence $A\setminus A\cap H = B\setminus B\cap H$ (\cite{bb3}, Lemma 5), i.e.
$A\setminus A\cap T = B\setminus B\cap T$. Since $\sharp (A\cap T)\le 2$, $\sharp (A\cup B)\cap T)\ge d+2$ and $g\le b+d-2$,
we get $g = b+d-2$, $\sharp (A\cap T)=2$, $\sharp (B\cap T)=d$ and $A\cap B\cap T=\emptyset$. Hence $g =d+b-2$ and at least $b-2$ of the points of $A$ are real. Since $g>b$, not all points of $b$ are real. We get
that $A = \{Q,\sigma (Q)\}\cup (B\setminus B\cap T)$ for some $Q\in \mathbb {P}^m(\mathbb {C})\setminus \mathbb {P}^m(\mathbb {R})$. Conversely
take any $P'\in \langle \nu _d(A)\rangle (\mathbb {R})\setminus (\cup _{A'\subsetneq A}\langle \nu _d(A')\rangle )$, with $A = \{Q,\sigma (Q)\}\cup F$ for some
$F\subset \mathbb {P}^m(\mathbb {R})$ with $\sharp (F)=b-2$. Since $d \ge 2b-1$, we have $r_{\mathbb {C}}(P') =b$ and $P'$ has border rank $b$. Since
$d\ge 2b-1$, $A$ is the only set with cardinality $\le b$ such that $P'\in \langle \nu _d(A)\rangle (\mathbb {C})$. Since $Q\notin \mathbb {P}^m(\mathbb {R})$, we
have $r_{\mathbb {R}}(P') >b$. Varying $Q$ and $F$
we cover a non-empty open subset of $\sigma _b(X_{m,d}(\mathbb {C}))(\mathbb {R})$. The first part of the proof gives $r_{\mathbb {R}}(P') \ge d+b-2$.
Set $D:= \langle \{Q,\sigma (Q)\}\rangle$. $D$ is a line defined over $\mathbb {R}$. Since $A$ evinces $r_{\mathbb {C}}(P')$, the set $\langle \nu _d(\{Q,\sigma (Q)\}\rangle
\cap \langle \nu _d(F)\rangle$ is a unique point, $P_1$. Since $\sigma (A)=A$ and $\sigma (P)=P$, the uniqueness of $P_1$ implies
$\sigma (P_1) =P_1$. Since $P_1$ has real rank $\le d$ with respect to the rational normal curve $\nu _d(D)$ (\cite{co}, Proposition 2.1),
we get $r_{\mathbb {R}}(P') \le r_{\mathbb {R}}(P_1)+\sharp (F) \le d+b-2$. Hence $r_{\mathbb {R}}(P')=d+b-2$. Hence $d+b-2$ is a typical rank.
\end{proof}

\begin{remark}\label{ww2}
We may go further, up to the range $2d+b-4$ for all $m\ge 2$ if $d\gg \max \{m,b\}$, but the case $m=2$ is quite different and the case $m=3$ requires a different proof
from the case $m\ge 4$ (roughly speaking, after doing the case $m=3$ one may do the case $m>3$ by induction on $m$ as in steps (d1) and (d2) of the proof of Lemma \ref{ww1}).
\end{remark}

\section{The bivariate case}\label{S2}

In this section we prove Theorem \ref{u1} by proving a stronger result, which shows that bivariate polynomials with real rank $d$ are ubiquitous
and ``~typical~'' even in very small pieces of $\sigma _b(X_{1,d}(\mathbb {C}))(\mathbb {R})$.

Fix an integer $b$ such that $2 \le b \le (d+2)/2$. We need to prove that $d$ is a typical rank of $\sigma _b(X_{1,d}(\mathbb {C}))(\mathbb {R})$. We prove
the following stronger result.

\begin{theorem}\label{u2}
Fix integers $d, b, a$ such that $b \ge 2$, $d \ge 2b-1$ and $2\le 2a \le b$. Fix $Q_1,\dots ,Q_a\in \mathbb {P}^1(\mathbb {C})$ and $P_i\in \mathbb {P}^1(\mathbb {R})$, $1\le i \le b-2a$. Set
$$A:= \langle \{P_1,\dots ,P_{b-2a},Q_1,\sigma (Q_1),\dots ,Q_a,\sigma (Q_a)\}\rangle.$$Assume $\sharp (A)=b$ and set $M:= \langle \nu _d(A)\rangle (\mathbb {R})$.
Then the real projective space $M$ has dimension $b-1$ and there is a non-empty open subset $U\subset M$ for the euclidean topology such that $r_{\mathbb {R}}(P)=d$
for all $P\in U$.
\end{theorem}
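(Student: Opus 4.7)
The plan is to reduce Theorem \ref{u2} to the already-proved case $b=2$ (Theorem \ref{w2}), invoking the upper semicontinuity of the real rank on binary forms.

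For the dimension claim, note that $d \ge 2b-1$ forces $b \le d+1$, so the $b$ distinct points $\nu_d(A)$ on the rational normal curve $X_{1,d}(\mathbb{C})$ are linearly independent. Hence $\langle \nu_d(A)\rangle$ is a $(b-1)$-dimensional complex projective subspace; because $\sigma(A)=A$, it is defined over $\mathbb{R}$, and $M$ is a real projective space of dimension $b-1$.

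Next, I would establish that $\{P \in \mathbb{P}^d(\mathbb{R}) : r_{\mathbb{R}}(P) = d\}$ is open in the euclidean topology. For every $r \ge 1$, the real secant locus $\sigma_r(\nu_d(\mathbb{P}^1(\mathbb{R})))$ is the image of the continuous map
\[
(\mathbb{P}^1(\mathbb{R}))^r \times \mathbb{P}^{r-1}(\mathbb{R}) \longrightarrow \mathbb{P}^d(\mathbb{R}), \qquad \bigl((\ell_j),[c_j]\bigr) \mapsto \Bigl[\sum_{j=1}^{r} c_j \ell_j^d\Bigr],
\]
whose source is compact, so the image is compact and hence closed. Applying this with $r = d-1$ and recalling that $d$ is the a priori maximum real rank of a nonzero binary form of degree $d$, the set $\{r_{\mathbb{R}} = d\}$ is the open complement.

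To produce a point in $M$ of real rank $d$, I would exploit $a \ge 1$ and restrict to the sub-pair $A' := \{Q_1, \sigma(Q_1)\} \subset A$. Its Veronese span $L := \langle \nu_d(A')\rangle(\mathbb{R})$ is a real projective line contained in $M$. Theorem \ref{w2} applied with $m=1$ and border rank $2$, on the complex-conjugate-pair branch, gives that a generic $P' \in L$ has complex rank $2$, is uniquely evinced by $A'$, and therefore has $r_{\mathbb{R}}(P')=d$. By the openness established above, $P'$ admits an open neighborhood $\mathcal{V} \subset \mathbb{P}^d(\mathbb{R})$ on which the real rank is identically $d$; then $U := \mathcal{V} \cap M$ is the desired non-empty open subset of $M$.

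The only real obstacle is the openness in the second step, which ultimately rests on the compactness of $\mathbb{P}^1(\mathbb{R})$; everything else is a direct consequence of Theorem \ref{w2} and the inclusion $\langle \nu_d(A')\rangle \subset \langle \nu_d(A)\rangle$.
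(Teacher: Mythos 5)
There is a genuine gap at the crucial step: the set $\{P\in\mathbb{P}^d(\mathbb{R}) : r_{\mathbb{R}}(P)=d\}$ is \emph{not} open in the euclidean topology, and the compactness argument you give for it is flawed. The map $\bigl((\ell_j),[c_j]\bigr)\mapsto\bigl[\sum_j c_j\ell_j^d\bigr]$ is not defined (hence not continuous into $\mathbb{P}^d(\mathbb{R})$) at the tuples where $\sum_j c_j\ell_j^d=0$, so its actual domain is a non-compact open subset of $(\mathbb{P}^1(\mathbb{R}))^r\times\mathbb{P}^{r-1}(\mathbb{R})$ and you cannot conclude that the rank $\le d-1$ locus is closed. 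In fact it is not closed: for $d=3$ the form $x^2y$ has real (and complex) rank $3$, yet it is the euclidean limit of the forms $y(x^2+\epsilon y^2)$, $\epsilon>0$, each of which has one real and two conjugate non-real roots and hence real rank $2$; more generally $x^{d-1}y$ has real rank $d$ but is the limit of $y(x^{d-1}+\epsilon y^{d-1})$, whose roots are distinct and not all real, so their real rank is $<d$. This is exactly the border-rank phenomenon, and it shows that the rank-$d$ locus fails to be open precisely at points like $x^{d-1}y$; consequently your neighborhood $\mathcal{V}$ of $P'$ cannot be produced by a general openness principle.

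What is true, and what the paper uses instead, is that local constancy of the rank holds near the particular point you chose because of an open \emph{root} condition, not an open rank condition. Writing the points of $M$ explicitly as $f=\sum_j\bigl(c_j(z-\alpha_j)^d+\overline{c_j}(z-\overline{\alpha_j})^d\bigr)+\sum_h d_h(z-\beta_h)^d$ and normalizing $\alpha_1=i$, the point $O\in L=\langle\nu_d(\{Q_1,\sigma(Q_1)\})\rangle(\mathbb{R})$ corresponds to $f_O=c(z-i)^d+\overline{c}(z+i)^d$, which has $d$ \emph{distinct real} roots (substitute $w=(z+i)/(z-i)$, or use \cite{cr}). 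Having $d$ distinct real roots is a euclidean-open condition on the coefficients, so all forms in $M$ obtained by taking $|c_j|$ ($j\ge 2$) and $|d_h|$ small still have $d$ distinct real roots, and by Causa--Re every such form has real rank exactly $d$. Your first step (the dimension count) and your identification of a rank-$d$ point on $L$ via the conjugate pair $\{Q_1,\sigma(Q_1)\}$ (as in Theorem \ref{w2}) agree with the paper; but the passage from that single point to an open subset $U\subset M$ must go through the distinct-real-roots criterion rather than the claimed openness of the rank-$d$ locus, which is false.
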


\begin{proof}Since $d\ge b-1$, the real projective space $M$ has dimension $b-1$. Choose real homogeneous coordinates on $\mathbb {P}^1(\mathbb {C})$ so that $Q_j = \alpha _j\in \mathbb {C}\setminus \mathbb  {R}$ and $P_h = \beta_h\in \mathbb {R}$. The space $M$ parametrizes (up to a non-zero real multiplicative constant) all
degree $d$ homogeneous polynomial $f$ of the form
\begin{equation}\label{equ1}
\sum _{j=1}^{a} (c_j(z-\alpha _j)^d+\overline{c_j}(z-\overline{\alpha _j})^d) + \sum _{h=1}^{b-2a} d_h(z-\beta _h)^d \ \ c_j\in \mathbb {C}, \ d_h\in \mathbb {R}
\end{equation}
Set $D:= \langle \nu _d(\{Q_1,\sigma (Q_1)\}\rangle \subset \mathbb {P}^d(\mathbb {C})$. $D$ is a line defined over $\mathbb {R}$ which intersects the rational normal curve $X_{1,d}(\mathbb {C})$
at two distinct complex conjugate points. Fix any $O\in D(\mathbb {R})$. Up to a real change of coordinates we may assume $\alpha _1 = i$ and hence $\overline{\alpha }_1 =-i$. Hence the polynomial $f_0$ associated to $O$ is of the form
$$ f_O = c(z-i)^d +\overline{c}(z-i)^d, \ c\ne 0$$
We claim that $r_{\mathbb {R}}(O)=d$. Indeed, $r_{\mathbb {R}}(O)\le d$ by \cite{co}, Proposition 2.1. Since $O\notin \{Q_1,\sigma (Q_1)\}$, we have $r_{\mathbb {C}}(O)=2$.
As in the Proof of the Claim in the proof of Theorem \ref{w2} we see that $r_{\mathbb {R}}(O) \ge d$. Hence $r_{\mathbb {R}}(O)=d$.
Set $w:= (z+i)/(z-i)$. Since $w^d = -\overline{c}/c$ has $d$ distinct roots, we get that $f_O$ has $d$ distinct roots. It is easy to check that these roots are
real, but this is also a consequence of \cite{cr}, Corollary 1. Since $f_O$ has $d$ distinct real roots, there is a $\epsilon \in \mathbb {R}$, $\epsilon >0$, such
that any $f$ in (\ref{equ1}) has $d$ distinct real roots  if $c_1 =c$, $\vert c_j\vert < \epsilon $ for all $j=2,\dots ,a$ and $\vert d_h\vert < \epsilon$ for all $h=1,\dots ,b-2a$.
Varying $c$, we get a non-empty open subset $U$ of $M$ formed by real polynomials with $d$ distinct real roots. Hence $f\in U$ has real rank $d$
by \cite{cr}, Corollary 2.1.
\end{proof}

\begin{remark}\label{u3}
Take $A$ as in the statement of Theorem \ref{u2}. We will say that $A$ has type $(b-2a,a)$. We only assumed that $a\ge 1$. With this assumption we get
that $d$ is a typical rank for the corresponding real projective space $M$.
\end{remark}

\vspace{0.3cm}

\qquad {\emph {Proof of Theorem \ref{u1}.}}  A Zariski open subset of $\sigma _b(X_{1,d}(\mathbb {C}))$ is given by the union
of all sets $\langle \nu _d(B)\rangle \setminus (\cup _{B'\subsetneq B} \langle \nu _d(B')\rangle )$ with $B\subset \mathbb {P}^1(\mathbb {C})$ and $\sharp (B)=b$. A Zariski open and non-empty open subset
of $\sigma _b(X_{1,d}(\mathbb {C}))(\mathbb {R})$ is obtained taking the union over all $B\subset \mathbb {P}^1(\mathbb {C})$ such that $\sharp (B)=b$ and $\sigma (B)=B$.
Only the ones of type $(b,0)$ are not covered by Theorem \ref{u2}. Of course, if $B$ as type $(b,0)$, then $r_{\mathbb {R}}(P)=b$
for all $P\in \langle \nu _d(B)\rangle \setminus (\cup _{B'\subseteq B} \langle \nu _d(B')\rangle )$.\qed

\begin{acknowledgements}
We thank the referees for useful remarks.
\end{acknowledgements}

\end{document}